\newcommand{\N}{\mathbb N}
\newcommand{\C}{\mathbb C}
\newcommand{\R}{\mathbb R}
\newcommand{\K}{\mathbb K}
\newenvironment{proof}
{\textit{Proof:} }
{\hfill$\square$}
\newtheorem{definition}{Definition}[section]
\newtheorem{theorem}{Theorem}
\begin{document}
\begin{frontmatter}
\title{A hierarchical random compression method for kernel matrices}
\author[UNCC]{Duan Chen}
\author[SMU]{Wei Cai}
\address[UNCC]{Department of Mathematics and Statistics, University of
North Carolina at Charlotte, Charlotte, NC 28223, USA}
\address[SMU]{Department of Mathematics, Southern Methodist University, Dallas, TX75275}

\bigskip
{\bf Suggested Running Head:}
\\
Hierarchical random compression method
\\
\bigskip
{\bf Corresponding Author: }
\\
Prof. Wei Cai \\
Department of Mathematics\\
Southern Methodist University \\
Dallas, TX75275 \\
Phone: 214-768-3320 \\
Email: cai@smu.edu
\newpage
\begin{abstract}
In this paper, we propose a hierarchical random compression method (HRCM)
for kernel matrices in fast kernel summations.
The HRCM combines the hierarchical framework of the $\mathscr{H}$-matrix and  a randomized sampling technique of
the column and row spaces for far-field interaction kernel matrices. We show
that a uniform column/row sampling (with a given sample size) of a far-field kernel matrix, without the need and associated cost
to pre-compute a costly sampling distribution, will give a low-rank
compression of such low-rank matrices, independent of the matrix sizes and only dependent on the separation of
the source and target locations. This far-field random compression technique is then implemented at each level of the
hierarchical decomposition for general kernel matrices, resulting in an $O(N\log N)$ random compression
method. Error and complexity analysis for the HRCM are included. Numerical results for electrostatic and
Helmholtz wave kernels have validated the efficiency and accuracy of the proposed method with a cross-over
matrix size, in comparison of direct $O(N^2)$ summations, in the order of thousands for a 3-4 digits relative accuracy.
\end{abstract}
\begin{keyword}
	Fast kernel summation, Randomized algorithm, Matrix approximation, Singular value decomposition, $\mathscr{H}$-matrix, Hierarchical algorithms.
\end{keyword}
\textsl{AMS Subject classifications: 65F30; Secondary, 68W20, 60B20}
\end{frontmatter}


\section{Introduction}

Kernel matrices arises from many scientific and engineering computation,
data analytics, and deep learning algorithms. How to efficiently handle those matrices
has been an active research
area as the size of the matrices increase dramatically due to large dimension
data set in the era of exascale computing and big data.
In many fields of applications, kernel matrices are used for summations of the following form
\begin{equation}
E_{i}=\sum_{j=1}^{N}\mathcal{K}(\mathbf{r}_{i},\mathbf{r}_{j})q_{j}, \quad i = 1,2,...M,
\label{eqn:original}%
\end{equation}
where $\mathcal{K}(\mathbf{r}_{i},\mathbf{r}_{j})$ is the kernel function
representing the interaction betweens $M$ targets  and $N$ sources, which have position coordinates $\mathbf{r}_{i}$, $\mathbf{r}_{j}$ and density $q_{i}$, $q_{j}$, respectively. In scientific computing, Eq. (\ref{eqn:original}) arises from boundary element discretization for partial differential equations such as  Poisson-Boltzmann equation, Helmholtz equation, Maxwell equations or fractional differential equations. The kernel summation also plays a fundamental role in non-parametric statistics or machine learning techniques in applications such as Latent semantic indexing (LSI), or analysis in DNA microarray data, or eigenfaces and facial recognition.


When $M$ and $N$ are large, the summation task as Eq. (\ref{eqn:original}) becomes prohibitively expensive for practical computations. To speed up the kernel matric summation, a low rank approximation usually is sought to reduce the operation from
$O(MN)$ to $O(KN)$ in a $K$-rank approximation where $K\ll N$. Generally, a small subspace, containing the major action of the original matrix, is first identified, which is then compressed and
approximated by a low rank representation. Traditional truncated singular value decomposition (SVD) and
rank-revealing QR factorization are popular ways to construct such low rank
approximations. But the cost of implementing SVD or QR themselves are much higher than a single matrix-vector multiplication, which is the typical core operation
required for an iterative solution of the linear systems related to the kernel matrices.
The fast multipole method (FMM) \cite{greengard1997fast} is one of the most important fast algorithms for kernel summation, in which target and source points are hierarchically divided as well-separated sets, and on each set, the kernel function is low-rank approximated by using multipole expansions. In the original FMM, kernel function is approximated by analytical tools (either with addition theorems of special functions or Taylor expansions) \cite{greengard1997fast,chew2001fast,darve2000fast,gimbutas2003generalized,cai2018}. To overcome the difficulties when analytic formulation of kernel functions is not available, various semi-analytic\cite{anderson1992implementation,gray2001n,lee2014distributed} and algebraic FMMs \cite{martinsson2011randomized,yarvin1998generalized,ying2004kernel} were developed in recent decades. In some other approaches \cite{kapur1998n,kapur1997fast}, the whole kernel matrix is split into block matrices with various ranks, on each of which the SVD was implemented and then a truncated summation was used. More recently, a random interpolative decomposition method was also developed in the framework of FMM \cite{march2015askit,march2017far,yu2017geometry}.

To take advantage of modern computational architecture and especially for high dimensional data, randomized methods are recognized to provide efficient approach of low-rank
approximations  to handle transient data sets, where access to the full matrices may not be
possible or too expensive as only single pass or constant number of passes of the
matrices are preferred or realistic.
Some fast Monte-Carlo algorithms for large matrix-matrix multiplications and low-rank approximations have been developed in \cite{drineas2006fast1,drineas2006fast2}. Still, those algorithms are generally not more efficient than {\em a single} implementation of Eq. (\ref{eqn:original}).
For example,  it was proposed \cite{frieze2004}\cite{siamrv2011} that by randomly sampling the column and rows of a matrix, based on a the magnitude of the row/column vector, will give a good low rank approximation in high probability. However, such an approach requires the calculation of the $L_2$ norm of the column/row vectors, which is already of complexity $O(MN)$  as the direct kernel summation cost. Furthermore, error analysis in \cite{drineas2006fast1,drineas2006fast2} is for general matrices and relies on large number of samples. Number of samples will be greatly limited in practice if high efficiency is pursued and the low-rank property of the matrix should give some additional benefits in error analysis.

 In this paper, we develop a novel hierarchical random compression algorithm for kernel matrices in order to enhance the efficiency of kernel summation at a very large scale.
 First we will apply the randomized sampling of the column and
row space technique for the kernel matrix resulting from far-field interactions,
i.e.  the target and source points set are well-separated. We show that for such a
scenery, the uniform sampling distribution will be sufficient to give a good
low-rank approximation, thus removing the need and associated cost of computing sampling distributions. The expectation of error depends on the number of sampled column (row) and  the diameter-distance ratio of the two sets.
For general source-field configurations, we will employe the $\mathscr{H}$-matrix framework \cite{hackbusch1999sparse,hackbusch2000sparse} to  construct a hierarchical multilevel
tree structure, and apply the far-field randomized low-rank approximation for admissible interactions. It should be noted that this hierarchical approach is different from that of FMM methods; it requires only a one-way top-down pass, and is simple to implement in a recursive manner.

The rest of the paper is organized as follows.  Section 2 describes the randomized compression method
for far field interaction matrices. Analysis of the algorithm for the far field
case is given in Section 3. Section 4 introduces the hierarchical random
compression method ((HRCM) where $\mathscr{H}$-matrix framework is used with the randomized
compression method for far field on different hierarchical level of the data
set. Numerical tests for the proposed HRCM are provided for kernels from
electrostatic and wave Green's functions in Section 5. Finally, conclusion and
discussion are given in Section 6.


\section{Basic random algorithms for well-separated sources and targets}

For convenience, we list some basic  notations in linear algebra.  For a vector ${\bf x}\in\K^{N}$, where $\K$ represents either real $\R$ or complex $\C$,  denote Euclidean length by
\begin{equation}
|{\bf x}|=\left(  \sum_{i=1}^{N}|x_{i}|^{2}\right)  ^{1/2}.%
\end{equation}
For a matrix $\mathbf{A}\in\K^{M\times N}$, let $A^{(j)}$ and
$A_{(i)}$ denote its $j$-th column and $i$-th row,
respectively. The Frobenius norm is
\begin{equation}
\|\mathbf{A}\|_{F}=\sqrt{\sum_{i=1}^{m}\sum_{j=1}^{n} A_{ij}^{2}},%
\end{equation}
and the product of $\mathbf{AB}$ can be written as
\begin{equation}
\mathbf{AB} = \sum_{j=1}^{N}A^{(j)}B_{(j)}.
\end{equation}

In this section and Section \ref{sec:analysis} we restrict ourselves to low-rank matrices corresponding to the approximation of kernel function for well-separated target and source points.

Define the diameter
of the target set $\{{\bf r}_i\}$ as
\begin{equation}
\label{eqn:diam}diam(T) := \max_{i,i^{\prime}}|\mathbf{r}_{i}-\mathbf{r}%
_{i^{\prime}}|,
\end{equation}
using the Euclidean norm in $\R^{d}$. Diameter of the source set $diam(S)$ is defined similarly. Additionally, we will need the distance of
the two groups as
\begin{equation}
\label{eqn:dist}dist(T,S) := |\mathbf{r}_{T}^{*}- \mathbf{r}_{S}^{*}|,
\end{equation}
where $\mathbf{r}_{T}^{*}$ and $\mathbf{r}_{S}^{*}$ are the Chebyshev centers
of sets $T$ and $S$, respectively. Then we say the source and target charges
are well-separated if $dist(T,S)\ge\frac{1}{2}(diam(T)+diam(S))$.

\subsection{Low-rank characteristics of kernel functions}


We say the kernel  function $\mathcal{K}(\mathbf{r}, \mathbf{r}^{\prime})$
is a generalized asymptotically smooth function \cite{bebendorf2000approximation,banjai2008hierarchical} if
\begin{equation}
\label{eqn:assum}|\partial_{\mathbf{r}}^{\alpha}\partial_{\mathbf{r}^{\prime}%
}^{\beta}\mathcal{K}(\mathbf{r}, \mathbf{r}^{\prime})|\leq c(|\alpha|,
|\beta|)(1+k|\mathbf{r}-\mathbf{r}^{\prime}|)^{|\alpha|+|\beta|}|\mathbf{r}-\mathbf{r}^{\prime}|^{-|\alpha|-|\beta|-\tau},
\end{equation}
where parameters $k,\tau\ge 0$ and $\alpha, \beta$ are $d$-dimensional multi-indices. The constant $C(|\alpha|, |\beta|)$ only depends on $|\alpha|$ and $|\beta|.$
Condition (\ref{eqn:assum}) covers a wide-range of kernel functions. For example, for both 2D and 3D Green's functions of Laplace equation, $k=0$,   but $\tau=0$ and $\tau=1$ for former and latter, respectively. For 3D Green's functions of Helmholtz equation, one has $k$ as wave number and $\tau=1$.

Consider the Taylor expansion of $\mathcal{K}(\mathbf{r}, \mathbf{r}^{\prime
})$ around $\mathbf{r}^{*}$, which is the Chebyshev center of $T$:
$\mathcal{K}(\mathbf{r}, \mathbf{r}^{\prime})=\tilde{\mathcal{K}}(\mathbf{r},
\mathbf{r}^{\prime})+R$ with the polynomial
\begin{equation}
\label{eqn:G-app}\tilde{\mathcal{K}}(\mathbf{r}, \mathbf{r}^{\prime}%
)=\sum_{|\upsilon|=0}^{m-1}\frac{1}{\upsilon!}(\mathbf{r}-\mathbf{r}^{*}%
)^{\upsilon}\frac{\partial^{\upsilon} \mathcal{K}(\mathbf{r}^{*},
\mathbf{r}^{\prime})}{\partial\mathbf{r}^{\upsilon}},%
\end{equation}
and the remainder $R$ satisfies
\begin{equation}
\label{eqn:residue}|R| = |\mathcal{K}(\mathbf{r}, \mathbf{r}^{\prime}%
)-\tilde{\mathcal{K}}(\mathbf{r}, \mathbf{r}^{\prime})|\leq\frac{1}%
{m!}|\mathbf{r}-\mathbf{r}^{*}|^{m}\max_{\zeta\in T, |\gamma|=m}\left|
\frac{\partial^{\gamma}\mathcal{K}(\zeta, \mathbf{r}^{\prime})}{\partial
\zeta^{\gamma}}\right|  .
\end{equation}
If the well-separated target (T) and source (S) charges satisfies
\begin{equation}
\label{eqn:condition}\displaystyle{\frac{\max\{diam(T),
diam(S)\}}{dist(T, S)}}<\eta,%
\end{equation}
for a parameter $0<\eta<1$, then Eq. (\ref{eqn:residue}) has the following
estimate,
\begin{equation}
\label{eqn:R-est}|\mathcal{K}(\mathbf{r}, \mathbf{r}^{\prime})-\tilde
{\mathcal{K}}(\mathbf{r}, \mathbf{r}^{\prime})|\leq c(m)\eta^{m}%
(1+k|{\bf r}-{\bf r}'|)^m(|{\bf r}-{\bf r}'|)|^{-\tau}, \quad
\mathbf{r}\in T, \mathbf{r}^{\prime}\in S.
\end{equation}
Estimate (\ref{eqn:R-est}) implies that for small parameter $k$ , if one
replaces $\mathcal{K}(\mathbf{r}_{i}, \mathbf{r}_{j})$ in Eq.
(\ref{eqn:original}) by $\tilde{\mathcal{K}}(\mathbf{r}_{i}, \mathbf{r}_{j})$
as defined in Eq. (\ref{eqn:G-app}) with error $O(\eta^{m})$, the corresponding interaction matrix
$\tilde{\mathcal{K}}$ is of low rank, i.e.
\begin{equation}
\label{eqn:eta}\mathrm{rank }(\tilde{\mathcal{K}}) = K(m) \ll\min\{M,N\} =
\mathrm{rank }(\mathcal{K}),
\end{equation}
where
\begin{equation}
k(m) = \sum_{p=0}^{m-1}\frac{(d-1+p)!}{(d-1)!p!}.%
\end{equation}
In the case of 2D,  we have $d=2$ and $k(m)= m(m+1)/2$.

This property indicates that we can replace matrix $\mathcal{K}$ by the low-rank matrix $\tilde{\mathcal{K}}$ with enough accuracy for well-separated
target and source points. Thus, the efficiency of computation could be greatly
improved. However, in many situations, there is not easily available explicit formula for
$\tilde{\mathcal{K}}$, as for layered Green's function, thus
the matrix $\tilde{\mathcal{K}}$ cannot not be explicitly computed by Eq.
(\ref{eqn:G-app}). In our approach, we will take advantage of the fact
that $\mathcal{K}$ has redundant information (essential low rank
characteristics) and will use randomized sampling methods to select a small amount of its rows or columns and
approximate the full matrix with the sampled sub-matrices. It should be noted that the low-rank characteristics depend on parameter $k$ and $\tau$ in
the decaying condition of the kernel (\ref{eqn:assum}).

\subsection{Random kernel compression algorithms}

For matrix $\mathbf{A}\in\K^{M\times N}$ with a rank $K\ll\min\{M, N\}$,  the
matrix-vector multiplication with vector $\mathbf{x}\in\K^{N}$ can be
represented as
\begin{equation}
\label{eqn:svd}\mathbf{Ax}=\sum_{i=1}^{K}\sigma_{A}^{i}\mathbf{U}_{A}%
^{(i)}{\mathbf{V}_{A}^{(i)}}^{*}\mathbf{x},
\end{equation}
where $\mathbf{A}=\mathbf{U}_{A}\Sigma_{A}\mathbf{V}^{*}_{A}$, $\mathbf{U}_{A}\in\K^{M\times M}$, $\mathbf{V}_{A}\in\K^{N\times N}$ is the SVD
decomposition of matrix $\mathbf{A}$. Although Eq. (\ref{eqn:svd}) includes
small amount of summation, performing SVD to obtain $\sigma_{A}^{i}$,
$\mathbf{U}_{A}^{i}$ and $\mathbf{V}_{A}^{i}$ is much more expensive than the
direct multiplication. Instead, we approximate the singular values and unitary
matrices by fast Monte Carlo methods. This process is outlined as follows:

Let $\mathbf{C}\in\K^{M\times c}$ be the matrix made of $c$ columns sampled from
matrix $\mathbf{A}$ and denote $\mathbf{C}=\mathbf{U}_{c}\Sigma_{c}%
\mathbf{V}_{c}^{*}$, where $\mathbf{U}_{c}\in\K^{M\times M}$ and $\mathbf{V}_{c}\in\K^{c\times c}$. Further, let $\mathbf{C}_{r}\in\K^{r\times c}$ be the
matrix made of $r$ rows sampled from $\mathbf{C}$ and denote $\mathbf{C}%
_{r}=\mathbf{U}_{r}\Sigma_{r}\mathbf{V}_{r}^{*}$, where $\mathbf{U}_{r}\in\K^{r\times r}$ and $\mathbf{V}_{r}\in\K^{c\times c}$.

\begin{itemize}
\item SVD is performed for the much smaller matrix $\mathbf{C}_{r}$, in which
$r,c\le K\ll\min\{M,N\}$ and independent of $M$ and $N$. Due to rapid decay of
singular values, only first $l$ columns of $\mathbf{V}_{r}$ are needed, where
$l$ is determined by checking $|\sigma_{r}^{l+1}-\sigma_{r}^{l}|<\epsilon
$, where $\epsilon$ is a preset accuracy criteria.\newline

\item We approximate $\mathbf{U}_{c}$ by $\tilde{\mathbf{U}}_{c}\in\K^{M\times
l}$. To obtain $\tilde{\mathbf{U}}_{c}$, we further approximate $\mathbf{U}%
_{c}\Sigma_{c} = \mathbf{CV}_{c} \approx\mathbf{CV}_{r}$, and compute
$\tilde{\mathbf{U}}_{c}$ by $\tilde{\mathbf{U}}_{c}\mathbf{R}=\mathbf{CV}_{r}%
$, where $\mathbf{R}$ is the upper triangle matrix resulting from the QR
decomposition of $\mathbf{CV}_{r}$.

\item Finally we take the approximation $\mathbf{U}_{A}\approx\tilde
{\mathbf{U}}_{A} = \tilde{\mathbf{U}}_{c}$ and $\mathbf{A}\mathbf{x}%
\approx\tilde{\mathbf{U}}_{A} (\tilde{\mathbf{U}}^{*}_{A}\mathbf{A})\mathbf{x}
$. Note that exact computation of $\tilde{\mathbf{U}}^{*}_{A}{\bf A}$ still requires
$O(lMN)$ operations, so the Monte Carlo Basic matrix multiplication algorithm
in \cite{drineas2006fast1} is adopted to reduce the complexity to $O(clN)$.
\end{itemize}

Overall the random kernel compression will have $O(\max\{M,N\})$ operations
and detailed implementations are given in Algorithm \ref{alg:svd}.
\begin{algorithm}[ht!]
		\caption{Random kernel compression}\label{alg:svd}
		\begin{algorithmic}
			\Require matrix ${\bf A}\in\K^{M\times N}$, $c, r,\in\N$, such that $1<c\ll N$ and $1<r\ll M$. A constant $\epsilon > 0$.
			\Ensure  $\sigma_t\in\R^+$, orthonormal vectors ${\bf U}_t\in\K^{M}$, and ${\bf V}_t\in\K^{N}$, for $t=1,2...,l\ll \min{(M, N)}$, such that ${\bf A}\approx\sum_{t=1}^{l}\sigma_t{\bf U}_t{\bf V}_t^*$.
			\State 1. Construct matrix ${\bf C}\in\K^{M\times c}$:
			\For{ t = 1 to c}
				\State (a) pick $i_t\in 1,2,...N$ randomly using uniform distribution;
				\State (b)  set ${\bf C}^{(t)}=\sqrt{N/c}{\bf A}^{(i_t)}$;
			\EndFor
			\State 2. Construct matrix ${\bf C}_r\in\R^{r\times c}$:
			\For{ t = 1 to $r$}
				\State (a) pick $j_t\in 1,2,...M$ randomly using uniform distribution;
				\State (b)  set ${{\bf C}_r}_{(t)}=\sqrt{M/r}{\bf C}_{(j_t)}$;
			\EndFor
			\State 3. Perform SVD on matrix ${\bf C}_r$, i.e., ${\bf C}_r = {\bf U}_r{\bf \Sigma_r}{\bf V}_r^*$ and denote $\sigma({\bf C}_r)$ as the singular values.
			\State 4. Let $l=\min{\{r, c,  \max{\{j, \sigma_j({\bf C}_r)\}}>\epsilon\}}$:
			\For{t =1 to $l$}
				\State (a) $\sigma_t = \sigma_{t}({\bf C_r})$;
				\State (b) ${\bf U}^c_t= {\bf C}{\bf V}_r$
			\EndFor
			\State 5. Perform QR decomposition on $\{{\bf U}^c_t\}$ to obtain the output orthonormal vectors $\{{\bf U}_t\}_{t=1}^{l}$.
			\State 6. Approximate ${\bf V} = {\bf A}^{*}{\bf U}_t$   by the Monte Carlo Basic matrix multiplication algorithm
in \cite{drineas2006fast1}, with $c$ columns from ${\bf A}^*$ and $c$ rows from ${\bf U}_t$;
			\State 7. Perform QR decomposition on ${\bf V}\in\K^{N\times l}$ to obtain the output orthonormal vectors $\{{\bf V}_t\}_{t=1}^{l}$
		\end{algorithmic}
	\end{algorithm}

	Note that in this algorithm, only the small matrix ${\bf C}_r$ is stored in memory and entries of other matrices are calculated on the fly. The idea of Algorithm \ref{alg:svd} is similar to the ``ConstantTimeSVD  algorithm'' in \cite{drineas2006fast2}. However, in that work, the columns ${\bf U}_t$ are directly calculated as ${\bf C}{\bf V}_r/\sigma_{t}({\bf C_r})$ and not orthonormal. More importantly, when ${\bf V}_r$ is not close enough to ${\bf V}_c$ (otherwise efficiency will be compromised), dividing rapidly decaying singular values is not numerically stable. There is no such issues in Algorithm \ref{alg:svd}.

\section{Analysis of the compression algorithm for well-separated sets}\label{sec:analysis}

In this section we investigate the error
\begin{equation}
\|(\mathbf{A}-\Pi\mathbf{A})\mathbf{x}\|\le\|(\mathbf{A}-\Pi\mathbf{A}%
)\|\|\mathbf{x}\|,
\end{equation}
where $\Pi\mathbf{A}$ is the random compression of $\mathbf{A}$ generated from Algorithm
\ref{alg:svd}.
Unfortunately, a full analysis on $\Pi\mathbf{A}$  with general sampling probability is technically complicated. We
would rather consider a theoretically simpler  case about
\begin{equation}\label{eqn:linear}
\mathbf{A}-\tilde{\Pi}\mathbf{A}, \quad \text{with } \tilde{\Pi} = {\bf U}_K{\bf U}^*_K
\end{equation}
where $ {\bf U}_K$ is the matrix  containing only the first $K$ columns of ${\bf U}_c$ in ${\bf C}={\bf U}_c\Sigma_c{\bf V}_c^*$. Matrix ${\bf C}$ here is the same as in  Algorithm \ref{alg:svd}, but could be associated with sampling arbitrary probability.  Construction of the projector $\tilde{\Pi}$ is equivalent to the ``LinearTimeSVD'' algorithm in \cite{drineas2006fast2}. It is computationally inefficient but theoretically simple. We cite the result from \cite{drineas2006fast2}:

\begin{theorem}
[{\small \emph{Theorem 2 and 3 in \cite{drineas2006fast2}}}]\label{thm:bd0}
Suppose
$\mathbf{A}\in\K^{M \times N}$ and $\mathbf{C}\in \K^{M\times c}$ being the
column sampled matrix from \textbf{A} as in Algorithm \ref{alg:svd}. Let $\tilde{\Pi}$ be the  projector
defined in Eq. (\ref{eqn:linear}), then
\begin{equation}
\|{\bf A}-\tilde{\Pi} {\bf A}\|^{2}_{F} \le\|\mathbf{A}-\mathbf{A}_{K}\|^{2}_{F} + 2\sqrt
{K}\|\mathbf{A}\mathbf{A}^{*}-\mathbf{C}\mathbf{C}^{*}\|_{F};
\end{equation}
and
\begin{equation}
\|{\bf A}-\tilde{\Pi} {\bf A}\|^{2}_{2} \le\|\mathbf{A}-\mathbf{A}_{K}\|^{2}_{2} + 2\|\mathbf{A}%
\mathbf{A}^{*}-\mathbf{C}\mathbf{C}^{*}\|_{2};
\end{equation}
where $\mathbf{A}_{K}$ the best $K$-rank approximation of $\mathbf{A}$.
\end{theorem}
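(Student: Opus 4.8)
\medskip
\noindent\textbf{Proof sketch.} The plan is to reduce both inequalities to a single perturbation quantity, the Hermitian matrix $\mathbf{M} := \mathbf{A}\mathbf{A}^* - \mathbf{C}\mathbf{C}^*$: the projector $\tilde{\Pi} = \mathbf{U}_K\mathbf{U}_K^*$ is read off from the top-$K$ eigenspace of $\mathbf{C}\mathbf{C}^*$, whereas both left-hand sides and both leading terms on the right involve the spectrum of $\mathbf{A}\mathbf{A}^*$, so the statement is ultimately a comparison of $\mathbf{A}\mathbf{A}^*$ with $\mathbf{C}\mathbf{C}^*$. The two structural identities I would use are: since $\tilde\Pi$ is an orthogonal projector, $\|\mathbf{A}-\tilde\Pi\mathbf{A}\|_F^2 = \mathrm{tr}(\mathbf{A}\mathbf{A}^*) - \mathrm{tr}(\tilde\Pi\,\mathbf{A}\mathbf{A}^*)$ with $\mathrm{tr}(\tilde\Pi\,\mathbf{A}\mathbf{A}^*) = \sum_{t=1}^{K}(\mathbf{u}^C_t)^*\mathbf{A}\mathbf{A}^*\mathbf{u}^C_t$, where $\mathbf{u}^C_t$ denotes the $t$-th left singular vector of $\mathbf{C}$ (the $t$-th column of $\mathbf{U}_c$); and $\|\mathbf{A}-\tilde\Pi\mathbf{A}\|_2^2 = \max\{\,\mathbf{y}^*\mathbf{A}\mathbf{A}^*\mathbf{y}\ :\ \|\mathbf{y}\|=1,\ \mathbf{y}\perp\mathrm{span}\{\mathbf{u}^C_1,\dots,\mathbf{u}^C_K\}\,\}$. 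I would also use the elementary facts $\|\mathbf{A}-\mathbf{A}_K\|_F^2 = \sum_{t>K}\sigma_t(\mathbf{A})^2$, $\|\mathbf{A}-\mathbf{A}_K\|_2^2 = \sigma_{K+1}(\mathbf{A})^2$, and $\sigma_t(\mathbf{A})^2 = \lambda_t(\mathbf{A}\mathbf{A}^*)$ with eigenvalues ordered nonincreasingly (and likewise for $\mathbf{C}$).

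For the Frobenius bound I would subtract the two identities and insert $\pm\sum_{t=1}^K\sigma_t(\mathbf{C})^2 = \pm\sum_{t=1}^K(\mathbf{u}^C_t)^*\mathbf{C}\mathbf{C}^*\mathbf{u}^C_t$, which rearranges to
\[
\|\mathbf{A}-\tilde\Pi\mathbf{A}\|_F^2 - \|\mathbf{A}-\mathbf{A}_K\|_F^2 = \sum_{t=1}^{K}\bigl(\lambda_t(\mathbf{A}\mathbf{A}^*)-\lambda_t(\mathbf{C}\mathbf{C}^*)\bigr) - \mathrm{tr}\bigl(\tilde\Pi\,\mathbf{M}\bigr).
\]
The trace term is bounded by $\|\tilde\Pi\|_F\,\|\mathbf{M}\|_F = \sqrt{K}\,\|\mathbf{M}\|_F$ (Cauchy--Schwarz for the trace inner product, using $\|\tilde\Pi\|_F^2 = \mathrm{rank}(\tilde\Pi) = K$). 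The eigenvalue-gap sum I would bound in absolute value by $\sqrt{K}\,\bigl(\sum_{t=1}^K|\lambda_t(\mathbf{A}\mathbf{A}^*)-\lambda_t(\mathbf{C}\mathbf{C}^*)|^2\bigr)^{1/2}$ (Cauchy--Schwarz again) and then by $\sqrt{K}\,\|\mathbf{M}\|_F$ via the Hoffman--Wielandt inequality for the Hermitian matrices $\mathbf{A}\mathbf{A}^*$ and $\mathbf{C}\mathbf{C}^*$. Adding the two pieces gives the claimed $2\sqrt{K}\,\|\mathbf{A}\mathbf{A}^*-\mathbf{C}\mathbf{C}^*\|_F$.

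For the spectral bound I would fix any unit $\mathbf{y}\perp\mathrm{span}\{\mathbf{u}^C_1,\dots,\mathbf{u}^C_K\}$. By the Courant--Fischer principle $\mathbf{y}^*\mathbf{C}\mathbf{C}^*\mathbf{y}\le\lambda_{K+1}(\mathbf{C}\mathbf{C}^*)=\sigma_{K+1}(\mathbf{C})^2$, hence $\mathbf{y}^*\mathbf{A}\mathbf{A}^*\mathbf{y} = \mathbf{y}^*\mathbf{C}\mathbf{C}^*\mathbf{y}+\mathbf{y}^*\mathbf{M}\mathbf{y} \le \sigma_{K+1}(\mathbf{C})^2 + \|\mathbf{M}\|_2$. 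Maximizing over such $\mathbf{y}$ and using the second structural identity gives $\|\mathbf{A}-\tilde\Pi\mathbf{A}\|_2^2 \le \sigma_{K+1}(\mathbf{C})^2 + \|\mathbf{M}\|_2$, after which Weyl's perturbation inequality $\sigma_{K+1}(\mathbf{C})^2 = \lambda_{K+1}(\mathbf{C}\mathbf{C}^*)\le\lambda_{K+1}(\mathbf{A}\mathbf{A}^*)+\|\mathbf{M}\|_2 = \sigma_{K+1}(\mathbf{A})^2+\|\mathbf{M}\|_2$ closes the estimate with constant $2$.

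The genuinely nontrivial input---and the step I expect to be the main obstacle---is bounding the eigenvalue-gap sum $\sum_{t\le K}(\lambda_t(\mathbf{A}\mathbf{A}^*)-\lambda_t(\mathbf{C}\mathbf{C}^*))$ by a \emph{Frobenius} norm of $\mathbf{M}$ (rather than by $K$ times its operator norm), which needs the Hoffman--Wielandt theorem together with the fact that it pairs eigenvalues in sorted order; one must also ensure it is invoked for the Hermitian matrices $\mathbf{A}\mathbf{A}^*,\mathbf{C}\mathbf{C}^*$ so the argument remains valid when $\K=\C$. A minor but essential point is that $\tilde\Pi$ must be the true orthogonal projector $\mathbf{U}_K\mathbf{U}_K^*$ onto the leading left singular subspace of $\mathbf{C}$ as in (\ref{eqn:linear})---not the unnormalized columns $\mathbf{C}\mathbf{V}_r$ produced by Algorithm \ref{alg:svd}---for the Pythagorean identities above to hold, which is exactly why this simplified projector is the object of the theorem.
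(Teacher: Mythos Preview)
Your argument is correct, and in fact it is essentially the proof given in the original reference \cite{drineas2006fast2}. Note, however, that the present paper does not supply its own proof of this theorem: the statement is introduced with ``We cite the result from \cite{drineas2006fast2}'' and is invoked again verbatim in the proof of Theorem~\ref{thm:bd1} (``Equations (\ref{eqn:bd1}) and (\ref{eqn:bd2}) have been stated in Theorem~\ref{thm:bd0}''), so there is nothing in the paper to compare your proposal against beyond the citation itself.

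For the record, your decomposition
\[
\|\mathbf{A}-\tilde\Pi\mathbf{A}\|_F^2-\|\mathbf{A}-\mathbf{A}_K\|_F^2
=\sum_{t=1}^{K}\bigl(\lambda_t(\mathbf{A}\mathbf{A}^*)-\lambda_t(\mathbf{C}\mathbf{C}^*)\bigr)-\mathrm{tr}(\tilde\Pi\,\mathbf{M}),
\]
followed by Cauchy--Schwarz plus Hoffman--Wielandt on the eigenvalue sum and Cauchy--Schwarz on the trace term, is exactly the mechanism in Drineas--Kannan--Mahoney's Theorem~2; your spectral-norm argument via Courant--Fischer and Weyl is their Theorem~3. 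Your closing remark that the argument requires the \emph{exact} orthogonal projector $\tilde\Pi=\mathbf{U}_K\mathbf{U}_K^*$ rather than the practical $\Pi$ of Algorithm~\ref{alg:svd} is also precisely the point the paper makes when it distinguishes the ``theoretically simple'' $\tilde\Pi$ from the computational projector.
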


Although the theoretical projector $\tilde{\Pi}$ is different from projector $\Pi$,  the one   actually used in our practical computations,  we still can obtain meaningful insights about sampling strategies and accuracy of Algorithm \ref{alg:svd} by examining Theorem \ref{thm:bd0}.

Since $\mathbf{A}$ is already assumed as low-rank, we can only focus on
estimating $\|\mathbf{A}\mathbf{A}^{*}-\mathbf{C}\mathbf{C}^{*}\|_{\xi},
{\xi=2, F}$. We look for a practical sampling probability and derive an error
estimate for well-separated source and target points. For simplicity, we
assume $a = diam(T)=diam(S)$, $\delta= dist (T,S)$, and $a\le\eta\delta$ for
$0<\eta<1$.

\subsection{A nearly optimal uniform sampling of far field kernel matrices}

The low rank property of a matrix $\mathbf{A}$ means most of its columns/rows
are linearly dependent, while each column/rows may contribute significantly
different to the overall matrix property. For example the matrix $A = vv^{T}$ with
$v^{T} = (1,2,3,....N)$ is only of rank one, but the last column/row is the
most important. Therefore, the column/row sampling probability is a critical
factor in minimizing the error $\|\mathbf{A}\mathbf{A}^{*}-\mathbf{C}\mathbf{C}^{*}\|_{\xi
}$.

The optimal probability of (column) sampling to perform a Monte Carlo
matrix-matrix multiplication $\mathbf{AB}$ is proposed in
\cite{drineas2006fast1}.

%

\begin{definition}
[Optimal probability]\label{df:op}
For ${\bf A}\in\K^{M\times N}, {\bf B}\in\K^{N\times P}$, $1\le c\le N$, and $\{p_j\}_{j=1}^N$ such that
\begin{equation}
p_{j} = \frac{|A^{(j)}||B_{(j)}|}{\sum_{j^{\prime}=1}^{N}|A^{(j^{\prime}%
)}||B_{(j^{\prime})}|}, \quad j = 1,2,..., N,
\end{equation}
then for $t=1$ to $c$, pick $i_t\in\{1,2,...,N\}$ with ${\bf Pr}[i_t=j]=p_j, j = 1, 2,...N$ independently and with replacement. Set $C^{(t)}=A^{(i_t)}/\sqrt{cp_{i_t}}$ and $R_{(t)}=B_{(i_t)}/\sqrt{cp_{i_t}}$, the expectation value $\mathbf{E}\left[  \|\mathbf{AB}-\mathbf{CR}\|_{F}\right]$ is minimized.

\end{definition}

Applying this definition to $\mathbf{AA}^{*}$, and using Lemma 4 in
\cite{drineas2006fast1}, it is easy to conclude that if $c$ columns are sampled, the expectation of error
$\|\mathbf{AA}^{*}-\mathbf{CC}^{*}\|^{2}_{F}$ is minimized as
\begin{equation}
\label{eqn:optimal}\mathbf{E}\left[  \|\mathbf{AA}^{*}-\mathbf{CC}^{*}%
\|^{2}_{F}\right]  = \frac{1}{c}\left(  \|\mathbf{A}\|_{F}^{4}-\|\mathbf{AA}%
^{*}\|_{F}^{2}\right).
\end{equation}
However, unless known in advance, utilizing the optimal probability is not
practical because its computation is even be more expensive than the actual
matrix-vector multiplication.

However, a nearly optimal probability is introduced in \cite{drineas2006fast1}, which will be
used for our approach.

\begin{definition}
[Nearly optimal probability]%
For the same conditions in Definition \ref{df:op}, the probability $\{p_j\}$ is called a nearly optimal probability if
\begin{equation}
p_{j} \ge\frac{\beta|A^{(j)}||B_{(j)}|}{\sum_{j^{\prime}=1}^{n}|A^{(j^{\prime
})}||B_{(j^{\prime})}|}, \quad j = 1,2,..., N,
\end{equation}
for some $0<\beta\le1$.
\end{definition}

With the nearly optimal probability, one has
\begin{equation}
\label{eqn:nearoptimal}\mathbf{E}\left[  \|\mathbf{AA}^{*}-\mathbf{CC}%
^{*}\|^{2}_{F}\right]  \le\frac{1}{\beta c}\|\mathbf{A}\|_{F}^{4}-\frac{1}%
{c}\|\mathbf{AA}^{*}\|_{F}^{2}.%
\end{equation}

Now we will present the following result.
\begin{theorem}\label{thm: beta}
For well-separated source and target points, uniform sampling provides a
nearly optimal probability.
\end{theorem}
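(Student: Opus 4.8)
The plan is to reduce the statement to a comparison of the column (and row) norms of the far-field kernel matrix $\mathbf A$, and then to control that comparison by the entry bounds coming from the asymptotic-smoothness condition (\ref{eqn:assum}) together with the separation geometry. Applying the nearly-optimal-probability definition to $\mathbf B = \mathbf A^{*}$ gives $|B_{(j)}| = |A^{(j)}|$, so for the uniform choice $p_{j} = 1/N$ the requirement is exactly
\begin{equation}
\frac{1}{N} \ \ge\ \beta\,\frac{|A^{(j)}|^{2}}{\|\mathbf A\|_{F}^{2}},\qquad j = 1,\dots,N .
\end{equation}
Since $\|\mathbf A\|_{F}^{2} = \sum_{j} |A^{(j)}|^{2}$ is $N$ times the average squared column norm, the largest admissible $\beta$ is $\beta = \bigl(\tfrac1N\sum_{j}|A^{(j)}|^{2}\bigr)\big/\max_{j}|A^{(j)}|^{2}$, which always lies in $(0,1]$; the substance of the theorem is that for a well-separated pair this ratio is bounded below by a constant depending only on $\eta$ (and on $\tau$, $k\cdot\mathrm{diam}$ through the kernel), not on $M$ or $N$. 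Hence it suffices to show that all columns of $\mathbf A$ have comparable Euclidean norm, with a comparability constant controlled by $\eta$.

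The second step is geometric. For $\mathbf r_{i}\in T$ and $\mathbf r_{j}\in S$, bounding the Chebyshev radii of $T,S$ by their diameters (Jung's theorem gives $\rho_{T},\rho_{S}\le a/\sqrt2$ with $a = \mathrm{diam}(T) = \mathrm{diam}(S)$) and using $a\le\eta\delta$ yields
\begin{equation}
\delta_{-} := (1-\sqrt2\,\eta)\,\delta \ \le\ |\mathbf r_{i}-\mathbf r_{j}| \ \le\ (1+\sqrt2\,\eta)\,\delta \ =:\ \delta_{+},
\end{equation}
valid for $\eta$ small enough that $\delta_{-}>0$, so that $\delta_{+}/\delta_{-}$ is a function of $\eta$ alone. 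The third step feeds this into the kernel bounds: (\ref{eqn:assum}) with $|\alpha| = |\beta| = 0$ gives $|\mathcal K(\mathbf r_{i},\mathbf r_{j})| \le c(0,0)\,\delta_{-}^{-\tau}$ (the factor $(1+k|\mathbf r-\mathbf r'|)^{0}$ is harmless), and for the kernels under consideration $|\mathcal K|$ also obeys a matching lower bound $|\mathcal K(\mathbf r_{i},\mathbf r_{j})| \ge c_{0}\,\delta_{+}^{-\tau}$ on well-separated configurations. Thus every squared column norm $|A^{(j)}|^{2} = \sum_{i=1}^{M}|\mathcal K(\mathbf r_{i},\mathbf r_{j})|^{2}$ lies between $M c_{0}^{2}\delta_{+}^{-2\tau}$ and $M c(0,0)^{2}\delta_{-}^{-2\tau}$, so
\begin{equation}
\beta \ \ge\ \frac{\min_{j}|A^{(j)}|^{2}}{\max_{j}|A^{(j)}|^{2}} \ \ge\ \frac{c_{0}^{2}}{c(0,0)^{2}}\Bigl(\frac{\delta_{-}}{\delta_{+}}\Bigr)^{2\tau} \ =\ \frac{c_{0}^{2}}{c(0,0)^{2}}\Bigl(\frac{1-\sqrt2\,\eta}{1+\sqrt2\,\eta}\Bigr)^{2\tau},
\end{equation}
which is positive and independent of $M,N$. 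Running the identical argument with rows and columns exchanged (the row-sampling step of Algorithm \ref{alg:svd}) gives the corresponding statement for row sampling.

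The step I expect to be the real obstacle is the lower bound on $|\mathcal K|$ used above: the asymptotic-smoothness hypothesis (\ref{eqn:assum}) is purely a decay/upper estimate and, on its own, does not forbid $\mathcal K$ from being small or vanishing on part of a well-separated box pair, which would make some column norms anomalously small and drive $\beta\to0$. The proof therefore needs the additional, mild hypothesis — satisfied by all the Green's functions treated here (Laplace, Helmholtz, layered-media, fractional) — that $\mathcal K(\mathbf r,\mathbf r')\,|\mathbf r-\mathbf r'|^{\tau}$ is bounded away from zero on well-separated configurations; this is precisely what upgrades the one-sided decay bound to two-sided control of the column norms. The remaining points are routine: the exact constant in Jung's theorem only affects the admissible range of $\eta$, and for $k>0$ one simply carries the bounded factor $(1+k\,\mathrm{diam})$ through the lower estimate as well.
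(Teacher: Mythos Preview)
Your argument is correct and follows the same skeleton as the paper's proof: both show that the optimal probabilities $p_{j}=|A^{(j)}|^{2}/\|\mathbf A\|_{F}^{2}$ are uniformly comparable to $1/N$ by bounding every entry $|\mathcal K(\mathbf r_{i},\mathbf r_{j})|$ above and below via the distance window $\delta_{-}\le|\mathbf r_{i}-\mathbf r_{j}|\le\delta_{+}$ for well-separated sets. The one substantive difference is in how the two-sided entry bound is obtained. The paper does not work from the asymptotic-smoothness hypothesis (\ref{eqn:assum}) at all; it simply \emph{assumes} that $|\mathcal K|$ is a radial, monotonically decreasing function of $|\mathbf r-\mathbf r'|$, so that $|\mathcal K(\delta+a)|\le|\mathcal K(\mathbf r_{i},\mathbf r_{j})|\le|\mathcal K(\delta-a)|$ immediately, and reads off $\beta=|\mathcal K(\delta+a)|^{2}/|\mathcal K(\delta-a)|^{2}$. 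You instead try to extract the bounds from (\ref{eqn:assum}) and correctly observe that this only gives the upper side, so an additional nonvanishing hypothesis is needed --- a point the paper sweeps under the radial/monotone assumption without comment. Two minor discrepancies: the paper carries the target weights $q_{i}$ in the matrix entries (they cancel in the ratio, so your omission is harmless), and it uses the cruder window $\delta\pm a$ rather than your Jung-theorem constant, which only affects the explicit value of $\beta$.
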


\begin{proof}
For Eq. (\ref{eqn:optimal}), the optimal probability is
\begin{equation}
p_{j} = \frac{|A^{(j)}|^{2}}{\|A\|_{F}^{2}}.%
\end{equation}
Recall entries of matrix $\mathbf{A}$ are $\mathcal{K}(\mathbf{r}_{i}, \mathbf{r}_{j})q_{i}$ and denote the distance $r_{ij} = |\mathbf{r}_{i}-\mathbf{r}_{j}|$ for simplicity. Then, for the well-separated source and target points, we have the bound
\[
\delta-a\le r_{ij}\le\delta+a,
\]
since $|\mathcal{K}(\mathbf{r}_{i}, \mathbf{r}_{j})|^2$ is monotonically
decreasing with $r_{ij}$, we have
\begin{equation}
p_{j} = \frac{|A^{(j)}|^{2}}{\|A\|_{F}^{2}}\le\frac{|\mathcal{K}(\delta-a)|^{2}\sum_{i=1}^{M}|q_{i}|^{2}}%
{N|\mathcal{K}(\delta+a)|^{2}\sum_{i=1}^{M}|q_{i}|^{2}}.
\end{equation}
Thus, the uniform sampling probability
\begin{equation}
\hat{p}_{j} = \frac{1}{N} > \frac{|\mathcal{K}(\delta+a)|^{2}}{|\mathcal{K}%
(\delta-a)|^{2}} \frac{|A^{(j)}|^{2}}{\|A\|_{F}^{2}},%
\end{equation}
is the the nearly optimal probability with $\beta= \displaystyle{\frac
{|\mathcal{K}(\delta+a)|^{2}}{|\mathcal{K}(\delta-a)|^{2}}}<1$.

\end{proof}

Theorem \ref{thm: beta} indicates that for a fixed number of samples and without additional computational effort (uniform sampling), we can achieve the nearly optimal accuracy as in estimate (\ref{eqn:nearoptimal}) in kernel compression for well-separated source and target points.  However, the bound in Eq.
(\ref{eqn:nearoptimal}) depends on parameter $\beta$, which is an indicator of
how ``well'' the two sets are separated. In case the two sets ``touching''
each other, one has $\eta\to 1$ or $\delta-a\to0$ and hence $\beta\to0$. As a result, the error bound
(\ref{eqn:nearoptimal}) fails.

\begin{figure}[th]
\begin{center}%
\begin{tabular}
[c]{cc}%
\includegraphics[width=0.48\textwidth]{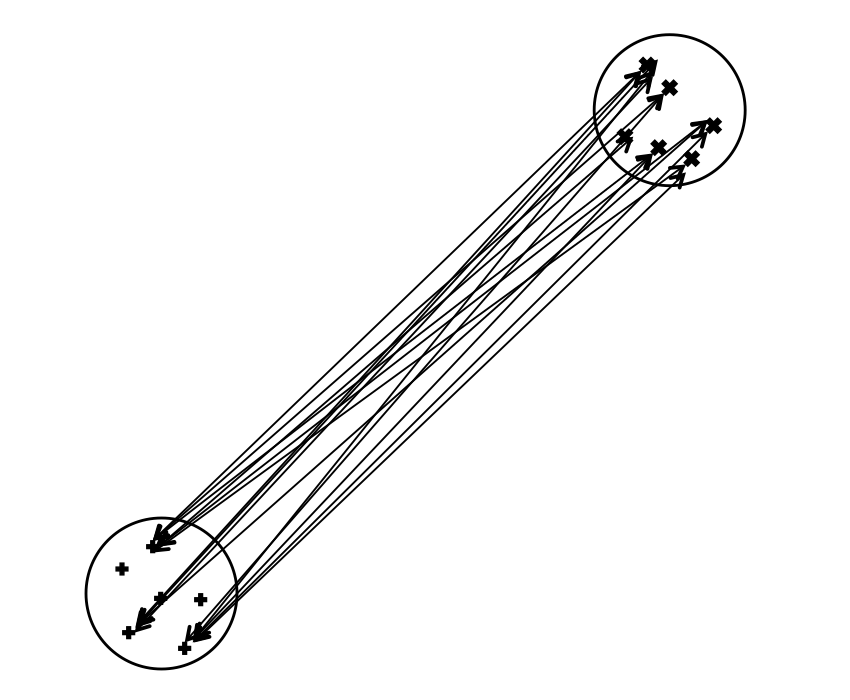} &
\includegraphics[width=0.48\textwidth]{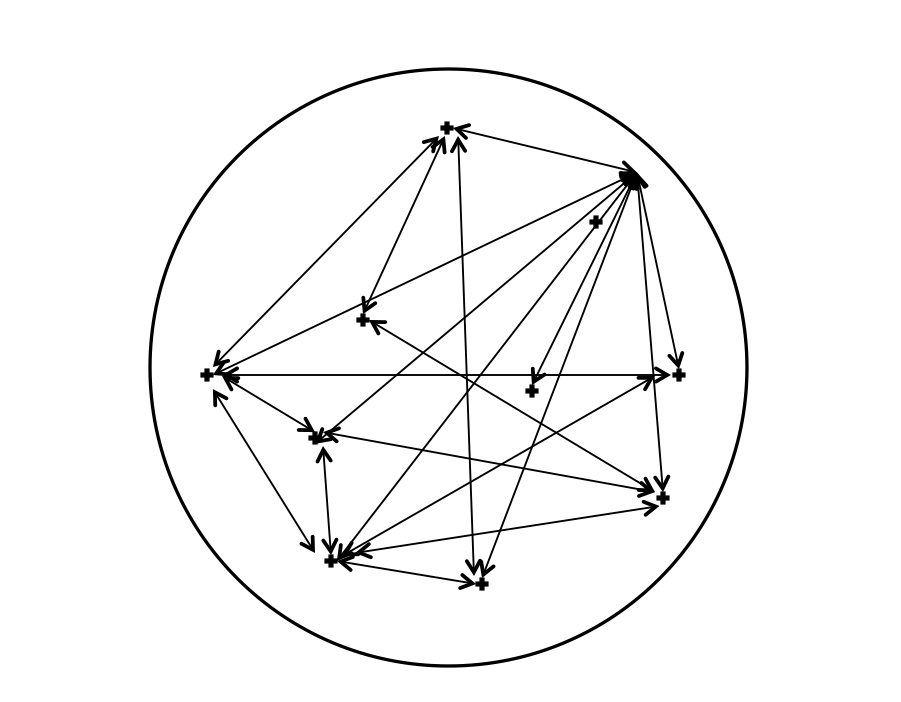}\\
(a) & (b)
\end{tabular}
\end{center}
\caption{(a) well-separated; (b) not well-separated.}%
\label{fig:interactions}%
\end{figure}
The reason why uniform sampling works  can also be illustrated heuristically by Fig.\ref{fig:interactions}: When target and source sets are well-separated and have a small diameter-distance ratio $\eta$ as in Fig. \ref{fig:interactions}(a), if we identify the interaction between target/source points as lines connecting the sources and targets, it can be seen that all interactions are ``similar'', in terms of direction and magnitude, to each other. Then in this case, columns or row vectors in the matrix have fairly the same contribution, so uniform sampling will have a small error. On the other hand, if target and source points belong to the same set as in Fig. \ref{fig:interactions}(b), the interactions are rather different from each other. Even the matrix maybe low rank, uniform sampling will yield in uncontrollable error due to the complexity of angles of the "interaction" lines.

In the next subsection we provide a further quantitative analysis of Eqs. (\ref{eqn:optimal}) or (\ref{eqn:nearoptimal}) upon the separation.  We show that even the  error  (\ref{eqn:optimal}) or (\ref{eqn:nearoptimal}) depends on not only sample number $c$, but also the diameter-distance ratio $\eta$.

\subsection{Error analysis on target-source separation}

Error estimates (\ref{eqn:optimal}) and (\ref{eqn:nearoptimal}) provided in
\cite{drineas2006fast2} are for a general matrix $\mathbf{A}%
$, stating that the error is small enough if the samples are large enough. But in practice, the sample number $c$ can not be too large due to efficiency requirement.
Here, we give a finer estimate for $\mathbf{A}$ corresponding to
well-separated target and source points. We show that for some type of kernels, if the target and source
sets are far way enough, the error is small enough regardless of the sample
numbers. Since we are more interested in the dependence of error bound on
diameter-distance ration, thus consider Eq.
(\ref{eqn:optimal}) for simplicity.

\begin{theorem}\label{thm:bd1}
Suppose $\mathbf{A}\in\K^{M \times N}$ is the matrix generated from the kernel function $\mathcal{K}({\bf r}, {\bf r}')$ satisfying condition (\ref{eqn:assum}), and $\mathbf{C}\in\K^{M\times c}$ being the column sampled matrix from \textbf{A}. Let $\tilde\Pi$ be
the orthogonal projector defined in Eq. (\ref{eqn:linear}), then
\begin{equation}\label{eqn:bd1}
\|{\bf A}-\tilde{\Pi} {\bf A}\|^{2}_{F} \le\|\mathbf{A}-\mathbf{A}_{K}\|^{2}_{F}+2\sqrt{K}\|\mathbf{A}\mathbf{A}^{*}-\mathbf{C}%
\mathbf{C}^{*}\|_{F};
\end{equation}
or
\begin{equation}\label{eqn:bd2}
\|{\bf A}-\tilde{\Pi} {\bf A}\|^{2}_{2} \le\|\mathbf{A}-\mathbf{A}_{K}\|^{2}_{2}+2\|\mathbf{A}\mathbf{A}^{*}-\mathbf{C}\mathbf{C}%
^{*}\|_{2};
\end{equation}
and
\begin{equation}\label{eqn:bd3}
\mathbf{E}\left[  \|\mathbf{AA}^{*}-\mathbf{CC}^{*}\|_{F}\right]  \le
C(\delta, a, \tau, k, M, N)\frac{1}{\sqrt{c}}\frac{2\eta}{2-\eta} \|\mathbf{A}\|_{F}^{2},
\end{equation}
where $\delta$ is the distance, $a$ is the diameter, and $0<\eta<1$ is the diameter-distance ratio of target and source sets.
\end{theorem}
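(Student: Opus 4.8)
Inequalities (\ref{eqn:bd1}) and (\ref{eqn:bd2}) are literally the content of Theorem \ref{thm:bd0} (Theorems 2 and 3 of \cite{drineas2006fast2}) applied to the present $\mathbf{A}$ and $\mathbf{C}$, so nothing needs to be done for them; all the work is in (\ref{eqn:bd3}). The plan is to start from the exact identity (\ref{eqn:optimal}) for the optimal column-sampling probability, pass to the first moment via Jensen's inequality,
\[
\mathbf{E}\big[\|\mathbf{AA}^{*}-\mathbf{CC}^{*}\|_{F}\big]\le\sqrt{\mathbf{E}\big[\|\mathbf{AA}^{*}-\mathbf{CC}^{*}\|_{F}^{2}\big]}=\frac{1}{\sqrt{c}}\sqrt{\|\mathbf{A}\|_{F}^{4}-\|\mathbf{AA}^{*}\|_{F}^{2}},
\]
and then show that the well-separated, low-rank structure forces $\|\mathbf{A}\|_{F}^{4}-\|\mathbf{AA}^{*}\|_{F}^{2}$ to be small relative to $\|\mathbf{A}\|_{F}^{4}$, the small factor being of order $\big(\tfrac{2\eta}{2-\eta}\big)^{2}$ up to a kernel-and-geometry constant.

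The key algebraic step is the Lagrange-type identity
\[
\|\mathbf{A}\|_{F}^{4}-\|\mathbf{AA}^{*}\|_{F}^{2}=\sum_{j,j'=1}^{N}\Big(|A^{(j)}|^{2}|A^{(j')}|^{2}-|\langle A^{(j)},A^{(j')}\rangle|^{2}\Big)=\sum_{j,j'=1}^{N}|A^{(j)}|^{2}|A^{(j')}|^{2}\sin^{2}\theta_{jj'},
\]
which follows from $\|\mathbf{A}\|_{F}^{2}=\sum_{j}|A^{(j)}|^{2}$, $\mathbf{AA}^{*}=\sum_{j}A^{(j)}(A^{(j)})^{*}$, and $\|\mathbf{AA}^{*}\|_{F}^{2}=\mathrm{tr}\big((\mathbf{AA}^{*})^{2}\big)=\sum_{j,j'}|\langle A^{(j)},A^{(j')}\rangle|^{2}$, where $\theta_{jj'}$ denotes the angle between columns $A^{(j)}$ and $A^{(j')}$. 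Bounding every $\sin^{2}\theta_{jj'}$ by $\max_{j,j'}\sin^{2}\theta_{jj'}$ and factoring it out of the double sum gives $\|\mathbf{A}\|_{F}^{4}-\|\mathbf{AA}^{*}\|_{F}^{2}\le\big(\max_{j,j'}\sin^{2}\theta_{jj'}\big)\|\mathbf{A}\|_{F}^{4}$, so (\ref{eqn:bd3}) is reduced to proving $\max_{j,j'}|\sin\theta_{jj'}|\le C(\delta,a,\tau,k,M,N)\,\tfrac{2\eta}{2-\eta}$.

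To bound the angle between two columns, write $A^{(j)}_{i}=\mathcal{K}(\mathbf{r}_{i},\mathbf{r}_{j})w_{i}$ (with $w_{i}$ the weights), and note the two source points obey $|\mathbf{r}_{j}-\mathbf{r}_{j'}|\le a$. Applying the smoothness hypothesis (\ref{eqn:assum}) with $|\alpha|=0$, $|\beta|=1$ along the segment joining $\mathbf{r}_{j}$ and $\mathbf{r}_{j'}$, together with $\delta-a\le|\mathbf{r}_{i}-\xi|\le\delta+a$ on that segment, gives $|A^{(j)}_{i}-A^{(j')}_{i}|\le c(0,1)\sqrt{d}\,(1+k(\delta+a))\,a\,(\delta-a)^{-\tau-1}|w_{i}|$, hence $|A^{(j)}-A^{(j')}|\le D\,\|w\|$ with $D:=c(0,1)\sqrt{d}\,(1+k(\delta+a))\,a\,(\delta-a)^{-\tau-1}$ and $\|w\|^{2}=\sum_{i}|w_{i}|^{2}$. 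Combining this with the lower bound $|A^{(j)}|\ge|\mathcal{K}(\delta+a)|\,\|w\|>0$ (using, exactly as in the proof of Theorem \ref{thm: beta}, that $|\mathcal{K}|$ is monotone decreasing in the source–target distance) and the elementary inequality $\sin\theta_{jj'}\le|A^{(j)}-A^{(j')}|/|A^{(j)}|$ for the angle between two nonzero vectors, we obtain $\max_{j,j'}|\sin\theta_{jj'}|\le D/|\mathcal{K}(\delta+a)|$; finally $a\le\eta\delta$ implies $\delta-\tfrac{a}{2}\ge a\tfrac{2-\eta}{2\eta}$, i.e. $\tfrac{a}{\delta-a/2}\le\tfrac{2\eta}{2-\eta}$, which puts the estimate in the claimed form with $C$ absorbing $c(0,1)$, $\sqrt{d}$, $1+k(\delta+a)$, the power of $\delta\pm a$, and $|\mathcal{K}(\delta+a)|^{-1}$. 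Substituting into the Jensen bound yields (\ref{eqn:bd3}).

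The main obstacle is the last step: squeezing the angle estimate down to the precise factor $\tfrac{2\eta}{2-\eta}$ (rather than a crude $O(a/\delta)$) requires choosing the expansion point to be the Chebyshev center of $S$ and carefully accounting for the Chebyshev radii of $T$ and $S$ in the triangle inequalities that produce the bounds $\delta-a\le|\mathbf{r}_i-\xi|$ and $|\mathbf{r}_i-\mathbf{r}_S^{*}|\ge\delta-a/2$; and it also needs a pointwise lower bound on the column norms, which is \emph{not} available from (\ref{eqn:assum}) alone and must be imported from the monotonicity assumption on $|\mathcal{K}|$ already used for Theorem \ref{thm: beta}. Everything else — Jensen, the Lagrange identity, and pulling the maximal angle out of the double sum — is routine.
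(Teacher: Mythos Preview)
Your proposal is correct and follows the same overall strategy as the paper: reduce (\ref{eqn:bd3}) to bounding $\|\mathbf A\|_F^4-\|\mathbf{AA}^*\|_F^2$ via the Lagrange-type identity, control each summand using the first-order smoothness of the kernel around the relevant Chebyshev center, and finish with Jensen. The one substantive difference is that you expand the Lagrange identity over \emph{columns} $A^{(j)}$ (indexed by source points) whereas the paper expands over \emph{rows} $A_{(i)}$ (indexed by target points). Because the weights $q_i$ sit on the rows, your choice makes them common to every column and spares you the paper's rescaling trick of comparing $q_{i'}A_{(i)}$ with $q_iA_{(i')}$; conversely, the paper's row version is what produces the explicit $\sqrt{M/N}$ factor inside $C(\delta,a,\tau,k,M,N)$, which your version does not naturally generate. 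Your geometric phrasing via $\sin\theta_{jj'}\le|A^{(j)}-A^{(j')}|/|A^{(j)}|$ is exactly the paper's algebraic inequality $\langle a,a\rangle\langle b,b\rangle-\langle a,b\rangle^{2}\le\langle a,a\rangle\langle\Delta,\Delta\rangle$ (with $b=a+\Delta$) rewritten, so the two arguments coincide at the crucial step. Both your proof and the paper's rely on the same first-order Taylor expansion about the Chebyshev center to obtain the precise factor $\tfrac{2\eta}{2-\eta}$ (the paper states this explicitly as ``approximated to the first order''), and both import the monotonicity of $|\mathcal K|$ in distance---not a consequence of (\ref{eqn:assum})---to lower-bound column norms by $|\mathcal K(\delta+a)|\,\|q\|$, exactly as you flag in your final paragraph.
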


\begin{proof}
Equations (\ref{eqn:bd1}) and (\ref{eqn:bd2}) have been stated in Theorem \ref{thm:bd0}. To prove (\ref{eqn:bd3}),
write the $i$-th row of $\mathbf{A}$ as 
\begin{equation}
\label{eqn:col}A_{(i)}=q_{i}(\mathcal{K}(\mathbf{r}_{i}, \mathbf{r}_{1}),
\mathcal{K}(\mathbf{r}_{i}, \mathbf{r}_{2}),..., \mathcal{K}(\mathbf{r}_{i},
\mathbf{r}_{N})),\quad i = 1,2,...M.
\end{equation}
Then, the difference between $q_{i^{\prime}}A_{(i)}$ and $q_{i}A_{(i^{\prime})}$ is,
approximated to the first order,
\begin{equation}
\Delta_{ii^{\prime}} = q_{i}q_{i^{\prime}}(\Delta_{ii^{\prime}}^{(1)},
\Delta_{ii^{\prime}}^{(2)},...\Delta_{ii^{\prime}}^{(N)}),
\end{equation}
where
\begin{equation}
\Delta_{ii^{\prime}}^{(j)} = (\mathbf{r}_{i}-\mathbf{r}_{i^{\prime}}%
)\cdot\frac{\partial}{\partial\mathbf{r}}\left.  \mathcal{K}(\mathbf{r},
\mathbf{r}_{j})\right|  _{\mathbf{r}=\mathbf{r}^{*}_{T}}.
\end{equation}
Recall assumption in Eq. (\ref{eqn:assum}) and condition (\ref{eqn:condition}%
), we have estimate
\begin{eqnarray}\label{eqn:deltaest}
|\Delta_{ii^{\prime}}^{(j)} |&\le&\frac{a}{\delta-\frac
{a}{2}}(1+k(\delta+a))\left(\delta-\frac{a}{2}\right)^{-\tau}
\le \frac{2\eta}{2-\eta}(1+2k\delta)\left(\frac{\delta}{2}\right)^{-\tau}.
\end{eqnarray}

Rewrite Eq. (\ref{eqn:optimal}) as
\begin{align}
\label{eqn:exp} &  \mathbf{E}\left[  \|\mathbf{AA}^{*}-\mathbf{CC}^{*}%
\|^{2}_{F}\right]  = \frac{1}{c}\left(  \|\mathbf{A}\|_{F}^{4}-\|\mathbf{AA}%
^{*}\|_{F}^{2}\right) \nonumber\\
&  =\frac{1}{c}\left[  \left(  \sum_{i=1}^{M}|A_{(i)}|^{2}\right)  ^{2}%
-\sum_{i=1}^{M}\sum_{i^{\prime}=1}^{M}\langle A_{(i)}, A_{(i^{\prime})}%
\rangle^{2}\right] \nonumber\\
&  = \frac{1}{c}\sum_{i=1}^{M}\sum_{i^{\prime}=1}^{M}\frac{1}{q^{2}_{i}%
q^{2}_{i^{\prime}}}\left[  \langle q_{i^{\prime}}A_{(i)}, q_{i^{\prime}%
}A_{(i)}\rangle\langle q_{i}A_{(i^{\prime})}, q_{i}A_{(i^{\prime})}%
\rangle-\langle q_{i^{\prime}}A_{(i)}, q_{i}A_{(i^{\prime})}\rangle
^{2}\right],
\end{align}
where $\langle,\rangle$ represent inner product.

Since $q_{i}A_{(i^{\prime})} = q_{i^{\prime}}A_{(i)} + \Delta_{ii^{\prime}} $,  using the linearity of inner product,
we have
\begin{align}
\label{eqn:diff} &  \langle q_{i^{\prime}}A_{(i)}, q_{i^{\prime}}%
A_{(i)}\rangle\langle q_{i}A_{(i^{\prime})}, q_{i}A_{(i^{\prime})}%
\rangle-\langle q_{i^{\prime}}A_{(i)}, q_{i}A_{(i^{\prime})}\rangle
^{2}\nonumber\\
&  = \langle q_{i^{\prime}}A_{(i)}, q_{i^{\prime}}A_{(i)}\rangle\langle
\Delta_{ii^{\prime}}, \Delta_{ii^{\prime}}\rangle- \langle q_{i^{\prime}%
}A_{(i)}, \Delta_{ii^{\prime}}\rangle^{2}\nonumber\\
&  \le\langle q_{i^{\prime}}A_{(i)}, q_{i^{\prime}}A_{(i)}\rangle\langle
\Delta_{ii^{\prime}}, \Delta_{ii^{\prime}}\rangle.
\end{align}
Plugging (\ref{eqn:diff}) in (\ref{eqn:exp}) and using estimate (\ref{eqn:deltaest}), we arrive at
\begin{align}
\label{eqn:exp2}\mathbf{E}\left[  \|\mathbf{AA}^{*}-\mathbf{CC}^{*}\|^{2}%
_{F}\right]   &  \le\frac{1}{c}\sum_{i=1}^{M}\sum_{i^{\prime}=1}^{M}\langle
A_{(i)}, A_{(i)}\rangle\langle\Delta_{ii^{\prime}} \Delta_{ii^{\prime}}%
\rangle\nonumber\\
&  \le\frac{M\|Q\|_{2}^{2}}{c}\left(  \frac{2\eta}{2-\eta}\right)  ^{2}(1+2k\delta)^2\left(\frac{\delta}{2}\right)^{-2\tau} \|\mathbf{A}\|_{F}^{2},%
\end{align}
where $Q = (q_{1},q_{2},...q_{M})$ is the density of target points. By Jensen's inequality
		\begin{eqnarray}\nonumber
			{\bf E}\left[\|{\bf AA}^*-{\bf CC}^*\|_F\right] &\le& \frac{\sqrt{M}\|Q\|_2}{\sqrt{c}\|{\bf A}\|_F}(1+2k\delta)\left(\frac{\delta}{2}\right)^{-\tau}\frac{2\eta}{2-\eta}	\|{\bf A}\|_F^2.\\\nonumber
				&\le & \frac{\sqrt{M}\|Q\|_2(1+2k\delta)\left(\frac{\delta}{2}\right)^{-\tau}}{\sqrt{N}\|Q\|_2\left|\mathcal{K}(\delta+a)\right|}\frac{1}{\sqrt{c}}\frac{2\eta}{2-\eta}	 \|{\bf A}\|_F^2\\
				&=&C(\delta, a, \tau,  k, M, N)\frac{1}{\sqrt{c}}\frac{2\eta}{2-\eta}	\|{\bf A}\|_F^2,
		\end{eqnarray}		
		 where $C(\delta, a, \tau, k, M, N)=\displaystyle{\left(\frac{\delta}{2}\right)^{-\tau}\frac{(1+2k\delta)}{\left|\mathcal{K}(\delta+a)\right|}\frac{\sqrt{M}}{\sqrt{N}}}$. 
		\end{proof}
		
		This result indicates that the error of kernel compression also depends on the diameter-distance ratio of the well-separated target and source points.
		It is important to point out that  in the constant $C(\delta, a, \tau, k, M, N)$, parameter $k$ is critical to the compression error. A typical example is the Green's function for Helmholtz equation, for which the high frequency problem is always a challenge for any kernel compression algorithm.
		

\section{Hierarchical matrix ($\mathscr{H}$-matrix) structure for general data sets}
	For general cases when target and source are not well-separated, in fact typically they belong to the same set, we logically partition the whole matrix into blocks, each of which will have low-rank approximation as being associated with a far-field interaction sub-matrix, thus the kernel compression algorithm applies hierarchically at different scales. We call the resulting method ``hierarchical random compression method (HRCM)".
\subsection{Review of $\mathscr{H}$-matrix}

\begin{definition}
[Hierarchical matrices ($\mathscr{H}$-matrix)]\label{def:hmatrix} Let $I$ be a
finite index set and $P_{2}$ be a (disjoint) block partitioning (tensor or
non-tensor) of $I\times I$ and $K\in\N$. The underlying field of the vector
space of matrices is $\in\{, \}$. The set of $\mathscr{H}$-matrix induced by
$P_{2}$ is
\begin{equation}
\mathscr{M}_{\mathscr{H},K} :=\{\mathbf{M}\in^{I\times I}: \text{each block }
\mathbf{M}^{b}, b\in P_{2}, \text{satisfies } rank(\mathbf{M}^{b})\le K\}.
\end{equation}

\end{definition}

\textbf{Remarks:} (1) The index set $I$ can be the physical coordinates of
target/source points; we denote a matrix $\mathbf{A} $ as R-$K$ matrix if
$rank(\mathbf{A})\le K$; (2) A specific $\mathscr{H}$-matrix is defined
through \ref{def:hmatrix} recursively. Full definition, description, and
construction of $\mathscr{H}$-matrices are given in detail in
\cite{hackbusch1999sparse,hackbusch2000sparse}. Two simple examples are given
as follows; (3) Since $\mathscr{H}$-matrix is recursive, we always assume
$\mathbf{A}\in\K^{N\times N}$ and $N = 2^{p}$ for the following context.

\noindent\textbf{Example One:} $\mathbf{A}\in\mathscr{M}_{\mathscr{H}, K}$ if
either $2^p=K$ or it has the structure
\[
\mathbf{A}=%
\begin{bmatrix}
\mathbf{A}_{11} & \mathbf{A}_{12}\\
\mathbf{A}_{21} & \mathbf{A}_{22}%
\end{bmatrix}
\text{ with } \mathbf{A}_{11}, \mathbf{A}_{22} \in\mathscr{M}_{\mathscr{H},K}
\text{ and R-}K \text{ matrices } \mathbf{A}_{12}, \mathbf{A}_{21}.
\]
This is the simplest $\mathscr{H}$-matrix. Such a matrix with three levels of division is
visualized in the left of Fig. \ref{fig:hA}. All blocks are R-$K$ matrices, except the smallest ones.

The next example is more complicated and it includes another recursive
concept of neighborhood matrix. Construction of this $\mathscr{H}$-matrix includes
three steps.

\noindent\textbf{Example Two:} (i) Neighborhood matrix
$\mathscr{M}_{\mathcal{N},K}$: if either $2^p=K$ or it has the structure
\[
\mathbf{A}=%
\begin{bmatrix}
\mathbf{A}_{11} & \mathbf{A}_{12}\\
\mathbf{A}_{21} & \mathbf{A}_{21}%
\end{bmatrix}
\text{ with } \mathbf{A}_{21} \in\mathscr{M}_{\mathcal{N},k} \text{ and R-}K
\text{ matrices } \mathbf{A}_{11}, \mathbf{A}_{12}, \mathbf{A}_{22}.
\]

(ii) $\mathbf{A}\in\mathscr{M}_{\mathcal{N}^{*},k}$ if $\mathbf{A}^{*}%
\in\mathscr{M}_{\mathcal{N},K}$.

(iii) $\mathbf{A}\in\mathscr{M}_{\mathscr{H},K}$ if either $2^p=K$ or it has the
structure
\[
\mathbf{A}=%
\begin{bmatrix}
\mathbf{A}_{11} & \mathbf{A}_{12}\\
\mathbf{A}_{21} & \mathbf{A}_{21}%
\end{bmatrix}
\text{ with } \mathbf{A}_{11}, \mathbf{A}_{22} \in\mathscr{M}_{\mathscr{H},K},
\mathbf{A}_{12}\in\mathscr{M}_{\mathcal{N},K}, \text{ and }\mathbf{A}_{21}%
\in\mathscr{M}_{\mathcal{N}^{*},K}.
\]
\begin{figure}[th]
\begin{center}
\includegraphics[width=0.85\textwidth]{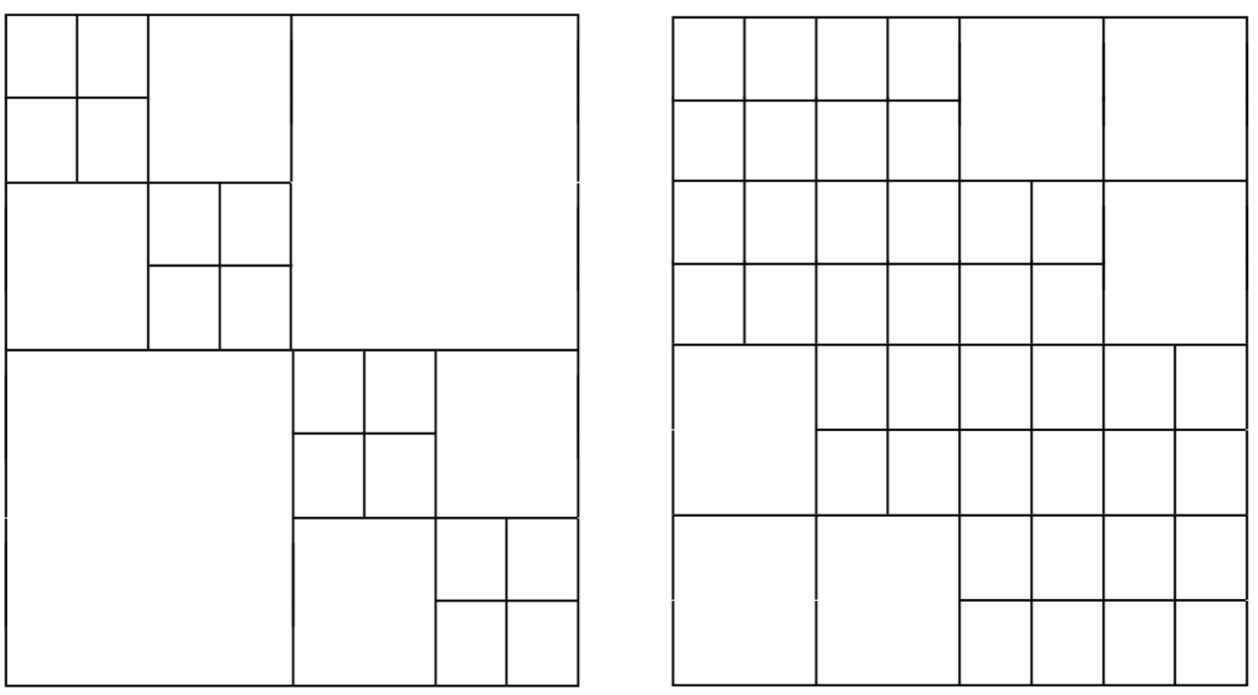}
\end{center}
\caption{Two examples of $\mathscr{H}$-matrix. Left: example one; right:
example two.}%
\label{fig:hA}%
\end{figure}

Visualization of the second example with three levels of division is given in the right subfigure
of Fig. \ref{fig:hA}. Similarly, only the larger blocks are R-$K$ matrices.

With such decomposition,  matrix-vector product will be performed as
\begin{equation}\label{eqn:splitting}
	{\bf A}{\bf x} = {\bf A}_{11}{\bf x}_1 + {\bf A}_{12}{\bf x}_2 +  {\bf A}_{21}{\bf x}_1 + {\bf A}_{22}{\bf x}_2.
\end{equation}
where ${\bf x}^T = ({\bf x}^T_1, {\bf x}^T_2)$. As ${\bf A}$ in Example One, random compression can be immediately applied to ${\bf A}_{12}{\bf x}_2 $ and ${\bf A}_{21}{\bf x}_1$, while recursive division and random compression need to be implemented on ${\bf A}_{11}{\bf x}_1 $ and ${\bf A}_{22}{\bf x}_2$, until a preset minimum block is reached where direct matrix-vector multiplication is used.

But  in Example Two, none of the four terms in Eq. (\ref{eqn:splitting}) is R-$K$ matrix ready for compression. Each ${\bf A}_{ij}$ needs to further divided and investigated. For instance, ${\bf A}_{12}\in\mathscr{M}_{\mathcal{N},K}$ by definition (iii), then by (i), ${\bf A}_{12_{11}}$, ${\bf A}_{12_{12}}$, and ${\bf A}_{12_{22}}$ are R-$K$ matrices and the compression algorithm can be applied. In contrast, ${\bf A}_{12_{21}}$ needs to be further divided.

Matrices in Example One and Two can be understood as interactions of
target/source points along a 1D geometry. Example One indicates that
interactions of all subset of $I$ can be approximated as low-rank compressions
except self-interactions of the subsets, which requires further division.
While the blocks in Example Two requires further division for both
self-interacting and immediate neighboring subsets of $I$.

Structures of $\mathscr{H}$-matrices for target/source points in  high-dimensional geometry are much more complicated. It is difficult to partition them into blocks as shown in Fig \ref{fig:hA}. Instead, we construct the $\mathscr{H}$-matrix logically through a partition tree
of $I$ and the concept of admissible clusters.

\subsection{Tree structure of $\mathscr{H}$-matrix for two dimensional data}

We illustrate algorithms in two-dimensional (2D) case. Here the dimension refers to the geometry where the target and source points are located instead of the dimension of the kernel function.  For
simplicity, let $\Omega=[0, L]\times[0,L]$ and consider a regular grid
\begin{equation}
I = \{(i,j): 1\le i, j \le N_1\}, \quad N_1 = 2^{p}.
\end{equation}
Each index $(i,j)\in I$ is associated with the square
\begin{equation}
X_{ij}: \{(x,y):(i-1)h\le x \le ih, (j-1)h\le y\le jh\}, \quad h = L/N_1,
\end{equation}
in which a certain amount of target/source points are assigned. The
partitioning $T(I)$ of $I$ uses a quadtree, with children (or leaves):
\begin{equation}
t^{l}_{\alpha, \beta} := \{(i,j): 2^{p-l}\alpha+1\le i \le2^{p-l}(\alpha+1),
2^{p-l}\beta+1\le j \le2^{p-l}(\beta+ 1)\},
\end{equation}
with $\alpha, \beta\in\{0,1,..., 2^{l}-1\}$ belong to level $l\in
\{0,1,...,p\}$. We consider a target quadtree $I_t$ and a source quadtree $I_s$,  which could be same or different.
Then  blocks of interaction matrix are defined as
block $b=(t_{1}, t_{2})\in T(I_t\times I_s)$, where $t_{1}, t_2\in I_t\times I_s$  belong to the the same level
$l$. Then follow Eq. (\ref{eqn:diam})-(\ref{eqn:dist}) we define the diameters
and distances of  $t_{1}$ and $t_{2}$, and the
admissibility condition
\begin{equation}
\max\{diam(t_{1}), diam(t_{2})\}\le\eta dist(t_{1}, t_{2}), \quad0<\eta<1,
\end{equation}
for the block $b=(t_{1}, t_{2})$. A block $b$ is called admissible or an
admissible cluster if either $b$ is a leaf or the admissibility condition
holds.  If $b$ is admissible, no matter how many points are in $t_1$ and $t_2$, the
block matrix $\mathbf{M}^{b}$ has rank up to $K$, thus low rank approximation
algorithms are used. Otherwise, both $t_{1}$ and $t_{2}$ will be further
partitioned into children until leaves. And the above process is implemented,
recursively.


\begin{figure}[th]
\begin{center}%
\begin{tabular}
[c]{cc}%
\includegraphics[width=0.35\textwidth]{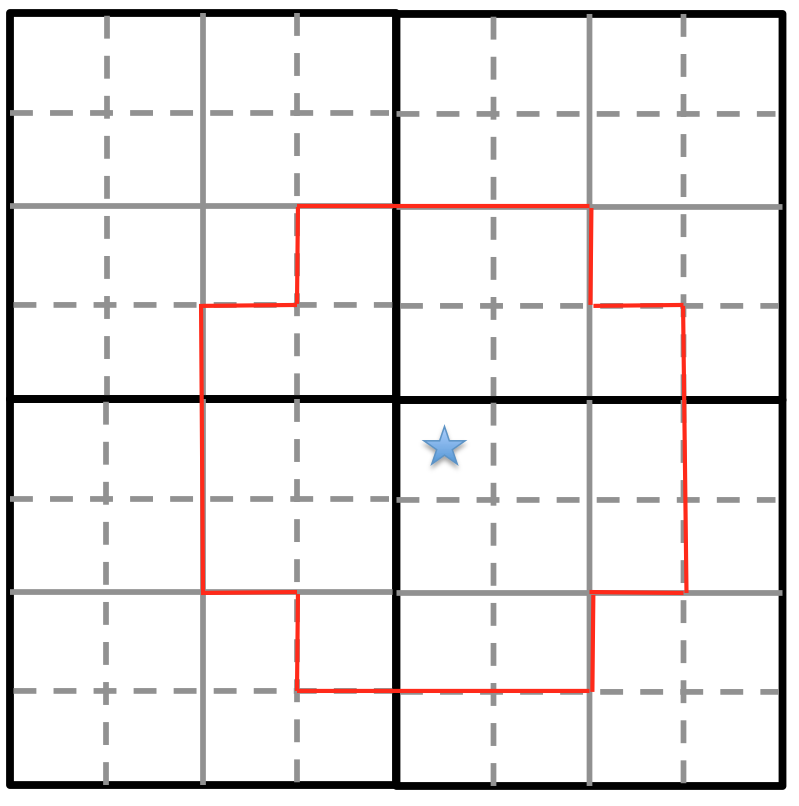} &
\includegraphics[width=0.65\textwidth]{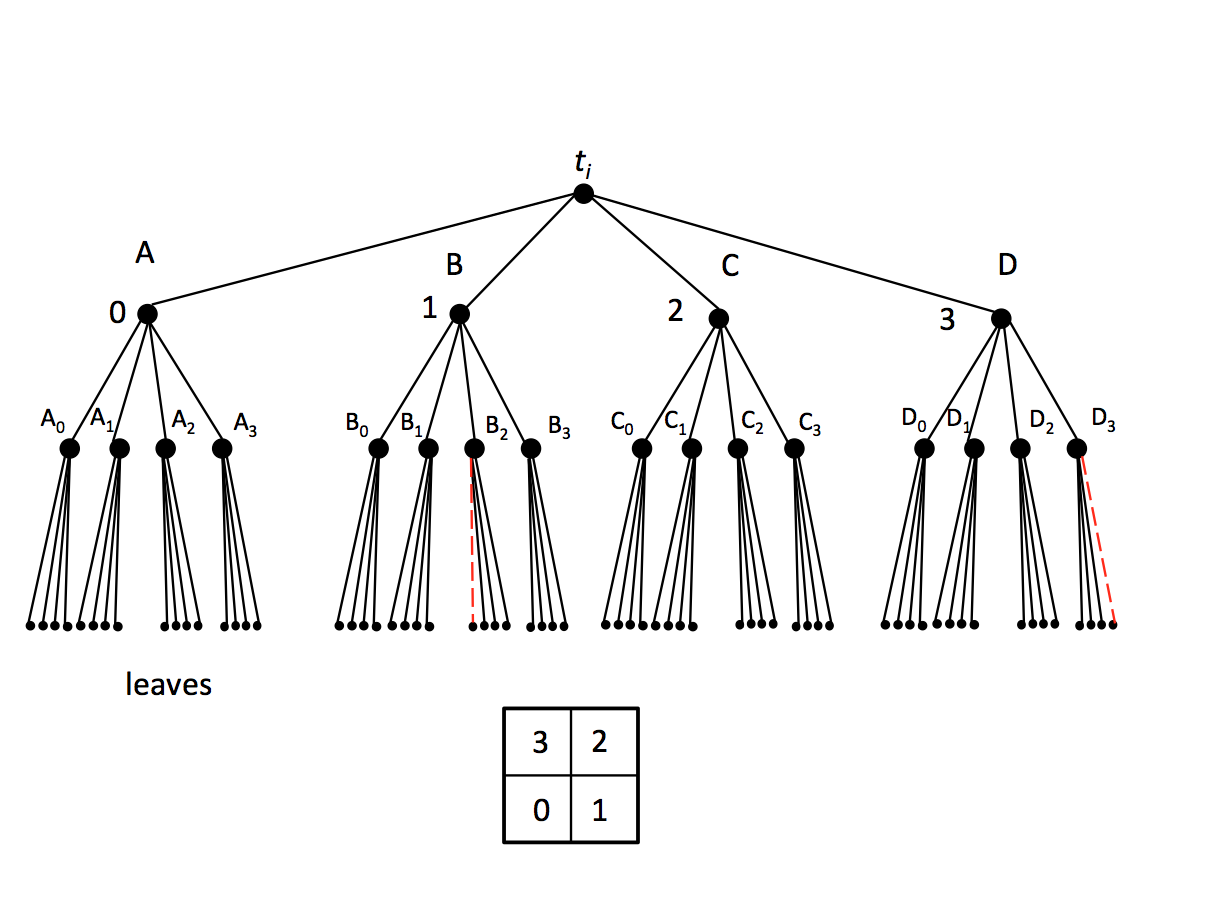}\\
(a) & (b)
\end{tabular}
\end{center}
\caption{Illustration of the index set $I$: (a) admissible cluster for the
starred square; (b) quadtree structure. }%
\label{fig:hmatrix}%
\end{figure}

Figure \ref{fig:hmatrix}(a) shows the index set $I$, where black
solid, gray solid and gray lines are for partition at level $l=1,2,3$,
respectively.  For different values of $\eta$, admissible clusters are
different for a given child. For the square marked with star in Fig
\ref{fig:hmatrix}(a), if $\eta=\sqrt{2}/2$, the non-admissible clusters are itself and the eight
immediate surrounding squares. While for $\eta= 1/2$, any
squares within the the red lines are non-admissible.

 Figure \ref{fig:hmatrix}(b) displays the quadtree that divides each square. Four children of each branch are labeled as $0,1,2,3$ and ordered
counter-clock wisely. If there are $N=4^p$ target (source) points, the depth of the target (source) tree is $p-p_0$, where $p_0$ is the number of points in each leaf, or direct multiplication is performed when matrix size is down to $4^{p_0}\times 4^{p_0}$.

\begin{figure}[th]
\begin{center}
\includegraphics[width=0.75\textwidth]{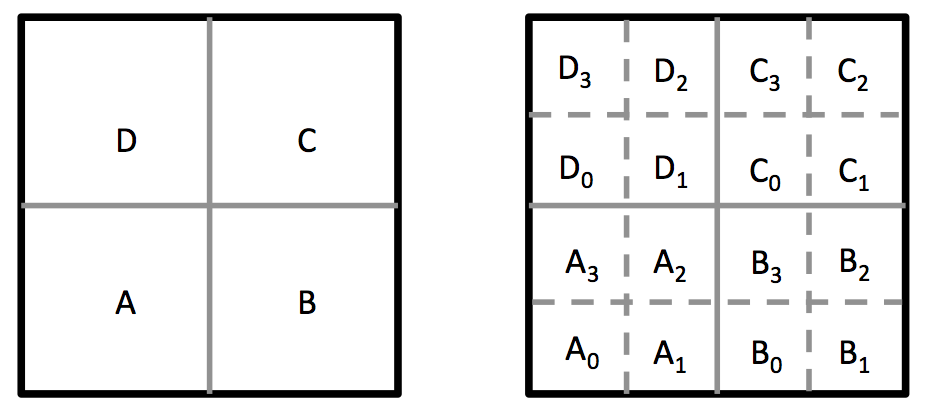}
\end{center}
\caption{Illustration of the partitioning of admissible clusters}%
\label{fig:partition}%
\end{figure}

Figure \ref{fig:partition} illustrates the admissible and non-admissible clusters.
Initially the 2D set is partitioned into four subdomain $A, B, C$ and $D$. Any
two of the subdomains are non-admissible for $\eta=\sqrt{2}/2$. Then each of them
are further divided into four children domain, as label on the right of Fig
\ref{fig:partition}, among which the interactions are examined. For example,
the interactions of $A$ and $B$ can be viewed as the sum of interactions of
$A_{i}$ and $B_{j}$, $i,j=0,1,2,3$. If we take $\eta=1/2$, only $A_{3}$ and
$B_{1}$, and $A_{0}$ and $B_{2}$ are admissible clusters. But if $\eta=
\sqrt{2}/2$, only $A_{2}$ and $B_{0}$, and $A_{1}$ and $B_{3}$ are
non-admissible pairs. For both values of $\eta$, only $A_{2}$ and $C_{0}$ are
non-admissible clusters in the interactions of $A$ and $C$.
Self-interactions, such as interaction between $A$ and $A$, can be viewed as
the same process of interactions among $A, B, C$ and $D$, but for $A_{0},
A_{1}, A_{2}$, and $A_{3}$.

Direct and low rank approximation of matrix-vector multiplications are
implemented on the quadtree. To perform the algorithms, all the index in $I$ is ordered as $i=\{0,1,...N-1\}$ with $N=4^p$. Note that there are totally $4^{p-l}$ points in each child/leaf at level $l$. Matrix column (row) sampling is achieved through sampling of children from level $l+1$ to level $p$. For example, cluster $b=(B_2, D_3)$ in Fig. \ref{fig:hmatrix} (b) is admissible and assume  it is at level $l$. If we generate $s_i\in\{0,1,2,3\}, i=l+1,..p$ randomly and choose only the $s_i$-th child of  $B_2$ (red dash line) at $i$-th level, then one row sampling of  the corresponding (block) kernel matrix is accomplished assuming $B_2\in I_t$. Similarly column sampling is the child-picking process on $D_3\in I_s$.

The full HRCM algorithms are summarized as in the following two algorithms:
 \begin{algorithm}
		\caption{HRCM: Direct product on source and target quadtrees}
		subroutine name: DirectProduct($*$target, $*$source, int level)
		\begin{algorithmic}
			\Require Root pointers of source and target quadtree information for ${\bf A}$ and ${\bf x}$. Current level $l$ and maximum level $p$.
			\Ensure  Product ${\bf y} = {\bf A}{\bf x}$, where ${\bf y}$ is stored in the target quadtree.
			\If{level == maxlevel}
				\State perform and scalar product, and return
			\Else
				\For{j = 0; j $<$ 4; j++}
					\For{i = 0; i $<$ 4; i++}
						\State DirectProduct(target-$>$child[j], source-$>$child[i], level +1)
					\EndFor
				\EndFor
			\EndIf
		\end{algorithmic}
	\end{algorithm}

\begin{algorithm}
		\caption{HRCM: Low-rank product on source and target quadtrees}
		subroutine name: LowRankProduct($*$target, $*$source, int level)
		\begin{algorithmic}
			\Require Root pointers of source and target quadtree information for ${\bf A}$ and ${\bf x}$. Current level $l$ and maximum level $p$.
			\Ensure   ${\bf y} \approx\sum_{t=1}^{l}\sigma_t{\bf U}_t{\bf V}_t^*{\bf x}$, where ${\bf y}$ is stored in the target quadtree.
			
			\State 1. Column sampling: On the source tree, pick the ``random path'' from level $l$ to maxlevel $p$ by only randomly choosing one child from each level;
			\State 2. Row sampling: On the target tree, pick the ``random path'' from level $l$ to maxlevel $p$ by only randomly choosing one child from each level;
			\State 3. Extract matrix entries from the source and target tree by the column/row sampling; perform Algorithm \ref{alg:svd}
			\State 4. Instore ${\bf y}$ into the target tree with root $*$target.
		\end{algorithmic}
	\end{algorithm}

\begin{algorithm}
		\caption{HRCM: $\mathscr{H}$-matrix product on source and target quadtrees}
		subroutine name: HmatrixProduct($*$target, $*$source, int level)
		\begin{algorithmic}
			\Require Root pointers of source and target quadtree  for ${\bf A}$ and ${\bf x}$. Current level $l$ and maximum level $p$.
			\Ensure   $\tilde{\bf y}\approx {\bf y} = {\bf A}{\bf x}$, where $\tilde{\bf y}$ is stored in the target quadtree.
			
			\If{matrix small enough}
				\State DirectProduct($*$target, $*$source, level)
			\Else
				\If{clusters rooted from $*$target, $*$source are admissible}
					\State{LowRankProduct($*$target, $*$source, level)}
				\Else
					\For{j = 0; j $<$ 4; j++}
						\For{i = 0; i $<$ 4; i++}
							\State HmatrixProduct(target-$>$child[j], source-$>$child[i], level +1)
						\EndFor
					\EndFor
				\EndIf
			\EndIf
		\end{algorithmic}
	\end{algorithm}

\subsection{Efficiency analysis}
	
	It is easy to perform efficiency analysis of the hierarchical kernel compression method  by constructing an interaction pattern tree. With $\eta = \sqrt{2}/2$, all the non-admissible clusters can be classified into three interaction patterns: the self-interaction (S), edge-contact interaction (E), and vertex-contact interaction (V), as shown in Fig. \ref{fig:format}. Assume it is currently level $l$ and those target and source boxes need to be further divided into four children in level $l+1$. In level $l+1$, those children form 16 interactions. It is easy to check that from level $l$ to level $l+1$, as displayed in Fig \ref{fig:format},  S-interaction forms 4 S-, 8 E- and 4 V-interactions at level $l+1$. On the other hand, E-interaction forms 2 E-interaction, 2 V-interactions and 12 admissible clusters for which  low-rank approximation (LR) applies.  Additionally, V-interaction forms 1 V-interactions and 15 LR approximations.
	 \begin{figure}[th]
		\begin{center}
		\includegraphics[width=0.85\textwidth]{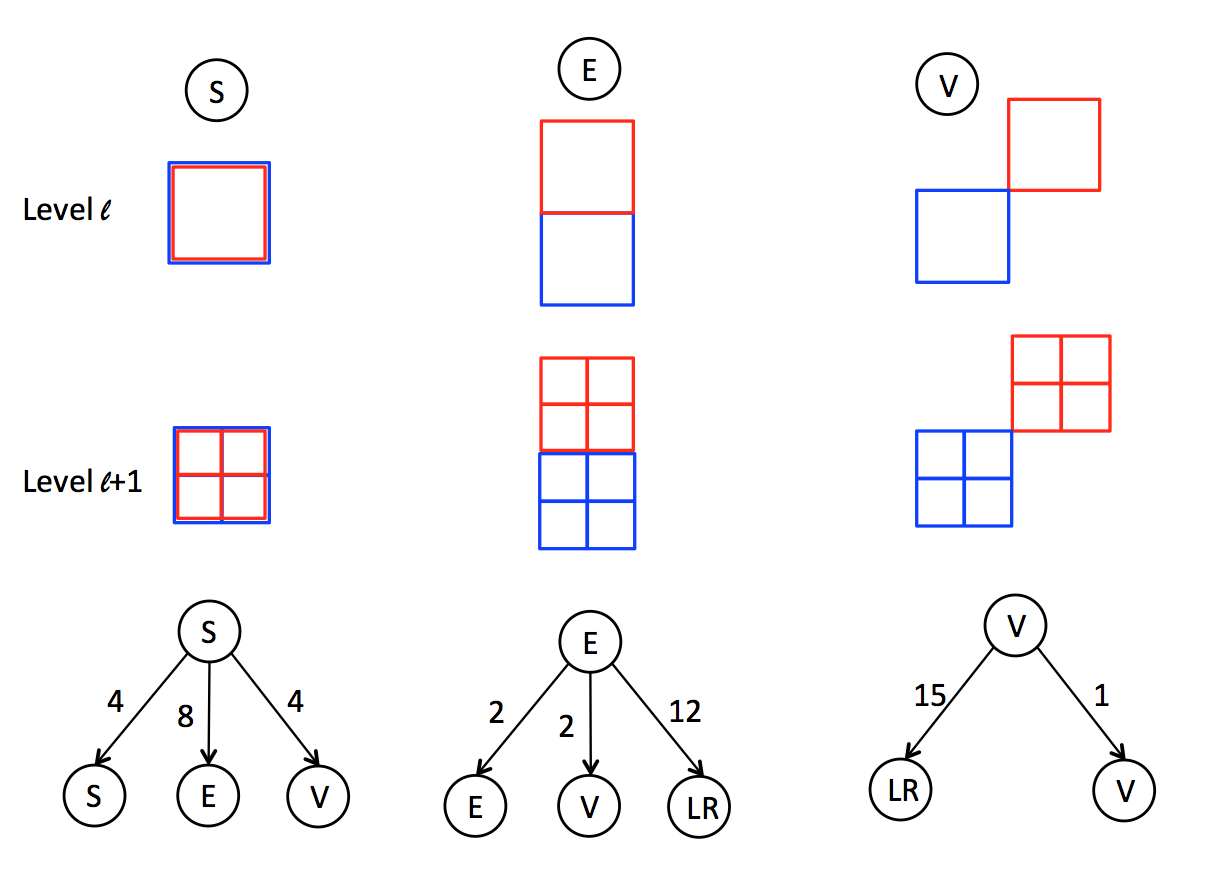}
		 \caption{Evolution of non-admissible clusters. S: self-interaction clusters; E: clusters touch by edge; V: clusters touch by vertex; LR: admissible clusters with low-rank approximation.}%
		\label{fig:format}%
		\end{center}
	\end{figure}
	
	\underline{Complexity of all direct calculations.} We assume the direct computation is implemented when the matrix scale is down to $4^{p_0}\times 4^{p_0}$. So we start from S-interaction as the root at level $l$ and just need to count how many E- and V-interactions  at level $p-p_0-1$ are generated. The  number is
	\begin{equation}
		8\cdot 2^{p-p_0-1-l-1}+ 8\cdot 2\cdot 1^{p-p_0-1-l-2} + 4\cdot 1^{p-p_0-1-l-2}.
	\end{equation}
	Since there are $4^l$ S-interactions at level $l$, the total complexity for performing direct computation is
	\begin{equation}\label{eqn:SD}
		16\cdot O((4^{p_0})^2)\sum_{l=0}^{p-p_0-1}4^l(8\cdot 2^{p-p_0-l-2} + 8\cdot 2+ 4)=O(4^p)=O(N).
	\end{equation}

	\underline{Complexity of all low-rank compressions}. Again we start from S as the root at level $l$ and  then the resulting  E and V start to generate LR at the ($l+2$)-th level and continue to the last level. Recall at level $l$ the complexity of performing LR is $O(4^{p-l})$, so the complexity of performing LR starting from S at level $l$ is
	\begin{eqnarray}\nonumber
		&&\underbrace{(8\cdot 12+4\cdot 15)O(4^{p-l-2})}_\text{from E and V at l+2 level}+\underbrace{\sum_{l'=l+3}^{p-p_0-1}8\cdot 2^{l'-l-2}\cdot 12\cdot O(4^{p-l'})}_\text{from all the E to the bottom}\\\label{eqn:SLR}
		&+&\underbrace{\sum_{l'=l+3}^{p-p_0-1}8\cdot 2\cdot 1^{l'-l-3} \cdot 15\cdot O(4^{p-l'})+\sum_{l'=l+3}^{p-p_0-1}4\cdot 1^{l'-l-2} \cdot 15\cdot O(4^{p-l'})}_\text{from all the V to the bottom}.
	\end{eqnarray}
	Note that the first item in (\ref{eqn:SLR}) dominates and there are $4^l$ S-interactions in level $l$. The total complexity of low-rank approximation in the HRCM is in the order of
	\begin{equation}
		\sum_{l=0}^{p-p_0-1}O(4^{p-l-2}) = O(p\times 4^p) = O(N\log{N}).
	\end{equation}
		Combining Eqs. (\ref{eqn:SLR}) and (\ref{eqn:SD}), we claim that the complexity of the HRCM is $O(N\log{N})$.

	\section{Numerical results}
	
	In this section, we present the accuracy and efficiency of the proposed HRCM in 2D computations. For all the following simulations, we take $N = 4^p$ target/source points uniformly distributed in square domains of length $L$. In the random kernel compression Algorithm \ref{alg:svd}, the total numbers of sampled columns and rows are denoted as $c=r=K$, respectively.
	Numerical error, or the difference between direct multiplication,  is defined in sense of sample mean, i.e.
	\begin{equation}
		\text{Mean error} = \frac{1}{N_s}\sum_{i=1}^{N_s} \frac{\|({\bf A}-\Pi_{i}{\bf A}){\bf x}\|}{\|{\bf Ax}\|},
	\end{equation}
	where $\Pi_{i}{\bf A}$ is the $i$-th realization of the compressed matrix by the HRCM.
	
	\subsection{Accuracy and efficiency for well-separated sets}
	
	 First, we investigate the decay of singular values for the kernel matrix formed by the well-separated target and source points. The kernel function is taken as $\mathcal{K}({\bf r}_i, {\bf r}_j)=e^{-0.01R}/R$ with $L=8$, and the SVD of  relatively small matrices with $N = 1024$ are calculated, with diameter/distance ratios being $\eta=0.5, 0.36$, and 0.25. Logarithmic values  of the first 18 singular values for each case are  displayed  in Fig. \ref{fig:sigma} (a). It clearly shows that singular values of the matrix decay faster as the corresponding target and source sets are further away. For the fixed $\eta = 0.5$, approximated singular values from randomly sampled matrix ${\bf C}_r\in\K^{K\times K}$, with $K = 4^2, 4^3$ and $4^4$ are presented in Fig. \ref{fig:sigma} (b). The relative error with respect to the largest singular value is small enough after several singular values even for a very small amount of samples.
%
	\begin{figure}[th]
		\begin{center}%
			\begin{tabular}[c]{cc}%
				\includegraphics[width=0.48\textwidth]{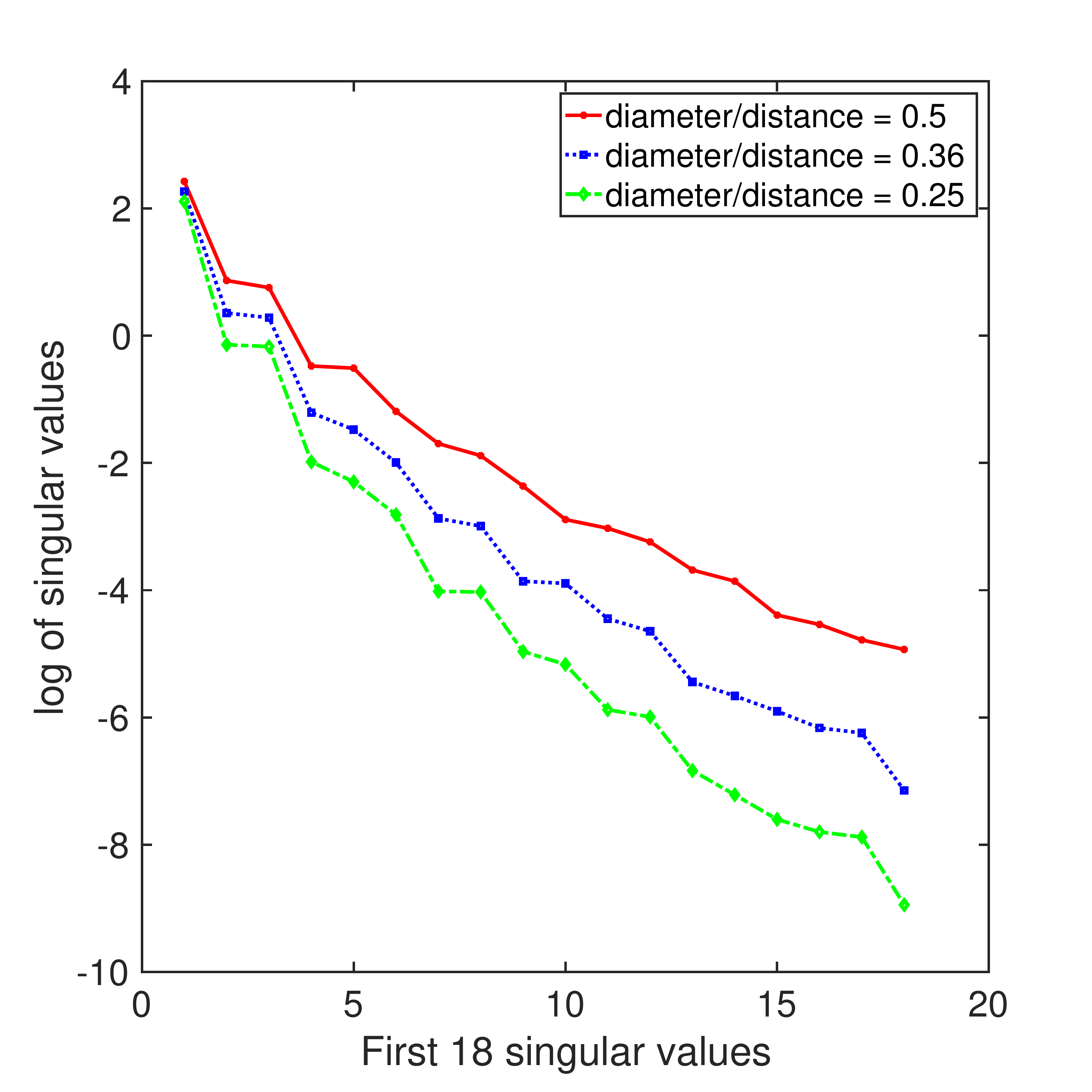} &
				\includegraphics[width=0.48\textwidth]{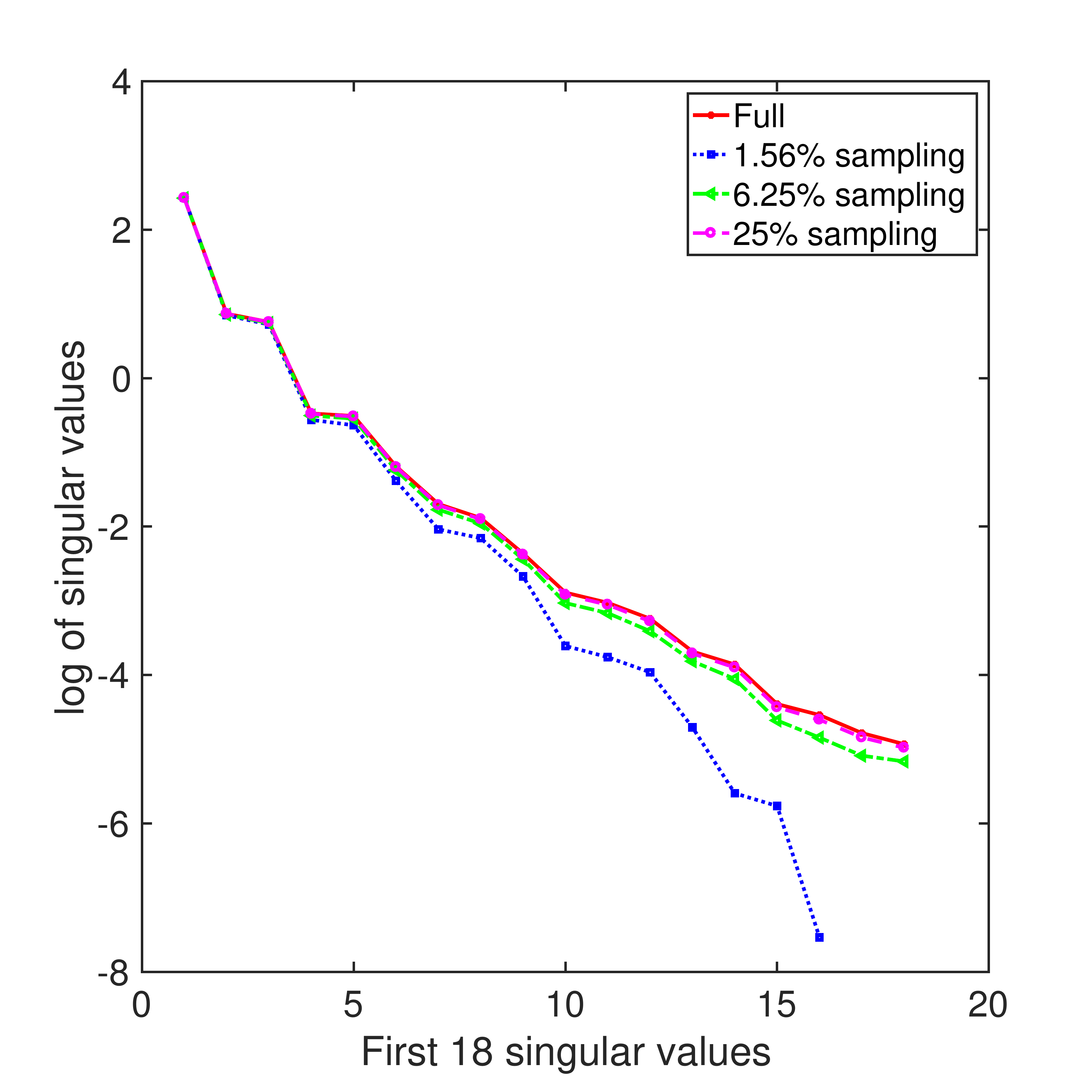}\\
				(a) & (b)
			\end{tabular}
		\end{center}
		\caption{(a) Singular values for matrices from well-separated points (1024 targets and 1024 source points) with various cluster diameter/distance ratios; (b) singular values comparison between different amounts of matrix samplings: $K = 16, 64, 256$ against $N = 1024$.}%
		\label{fig:sigma}%
	\end{figure}
	
	Next, we check the the algorithm accuracy. A total of $N$ target points and source points are uniformly assigned in two $8\times 8$ boxes with centers 16 units apart. Then, the direct multiplication (\ref{eqn:original}) and Algorithm \ref{alg:svd} are performed with parameter $c=r=K$ and $\epsilon = 1.0\times 10^{-8}$. For each comparison, sample mean and variance of errors are calculated with number of realization of HRCM $N_s=20$. Errors and variances for kernels $\mathcal{K}({\bf r}_i, {\bf r}_j)=\log{(R)}$ and $\mathcal{K}({\bf r}_i, {\bf r}_j)=\exp{(-0.01R)}/R$ are displayed in Tables \ref{table:kvsplog}-\ref{table:kvsp}, respectively,  with various $N$ and $K$ and $\eta=0.5$. We can clearly observe the convergence of the mean errors against $K$ in these tables.

	\begin{table}[ht]
		\caption{Errors  and variances for a pair of well-separated target/source point sets. Kernel function $\mathcal{K}({\bf r}_i, {\bf r}_j)=\log{(\sqrt{(x-x')^2+(y+y')^2})}-\log{(\sqrt{(x-x')^2+(y-y')^2})}$}%
		\label{table:kvsplog}%
		\centering
		\begin{tabular}[c]{c|l|l|l|l|l}\hline\hline
		 & $N = 1,024$ & $ N = 4096$ & $ N = 16, 384 $& $N = 65,536$ & $ N = 262,144$ \\\hline
		$K = 16$ &&&&\\
		Mean          & 2.79E-2 & 3.07E-2 & 3.51E-2 & 3.78E-2 & 4.01E-2\\
		Variance     & 3.58E-4 & 4.13E-4 & 5.38E-4 & 6.32E-4 & 6.95E-4\\
		\hline
		$K = 64$ & &   &  &   & \\
		Mean & 8.06E-3 & 8.54E-3 & 9.70E-3 & 9.84E-3 & 1.01E-2 \\
		Variance & 5.46E-6 & 5.89E-6 & 4.92E-6 & 6.27E-6 & 6.18E-6 \\
		\hline
		$K = 256$ &  &  &  & &   \\
		Mean & 2.25E-3 & 2.39E-3 & 2.52E-3 & 2.90E-3 & 2.75E-3 \\
		Variance & 4.76E-6 & 4.71E-6 & 5.37E-6 & 5.63E-6 & 5.86E-6 \\
		\hline\hline
		\end{tabular}
	\end{table}

	\begin{table}[ht]
		\caption{Errors  and variances for a pair of well-separated target/source point sets. Kernel function $\mathcal{K}({\bf r}_i, {\bf r}_j)=\exp{(-0.01R)}/R$}%
		\label{table:kvsp}%
		\centering
		\begin{tabular}[c]{c|l|l|l|l|l}\hline\hline
		 & $N = 1,024$ & $ N = 4096$ & $ N = 16, 384 $& $N = 65,536$ & $ N = 262,144$ \\\hline
		$K = 16$ &&&&\\
		Mean          & 2.67E-2 & 3.39E-2 & 3.07E-2 & 3.02E-2 & 3.51E-2\\
		Variance     & 7.51E-4 & 4.44E-4 & 6.28E-4 & 6.62E-4 & 9.95E-4\\
		\hline
		$K = 64$ & &   &  &   & \\
		Mean & 7.46E-3 & 7.58E-3 & 6.70E-3 & 8.51E-3 & 8.40E-3 \\
		Variance & 1.41E-5 & 2.78E-5 & 3.89E-5 & 4.17E-5 & 4.86E-5 \\
		\hline
		$K = 256$ &  &  &  & &   \\
		Mean & 1.62E-3 & 1.85E-3 & 1.92E-3 & 2.10E-3 & 2.30E-3 \\
		Variance & 1.76E-6 & 1.47E-6 & 1.37E-6 & 3.53E-6 & 3.53E-6 \\
		\hline\hline
		\end{tabular}
	\end{table}
	
	We conclude that based on the numerical results from Tables \ref{table:kvsplog}-\ref{table:kvsp} that, given the fixed diameter/distance ratio of the target/source point sets, the accuracy of the low-rank compression algorithm does not depend significantly on the total number $N$ but the sample number $K$.  It suggests that in computational practice, as long as two boxes are admissible clusters, it does not matter how many target/source points in them, the algorithm accuracy is purely controlled by the diameter/ration distance and number of samples.  	
	
	Table \ref{table:kvsptime} summarizes the corresponding computational time in seconds for the matrix-vector product, for direct computation and the low-rank compression method. If the target and source points are well-separated, the algorithm is very efficient and the computational time is linear both in sample size $K$ and matrix size $N$. CPU times for the two kernel functions are similar so only one of them is presented.

	\begin{figure}[th]
		\begin{center}%
				\includegraphics[width=0.6\textwidth]{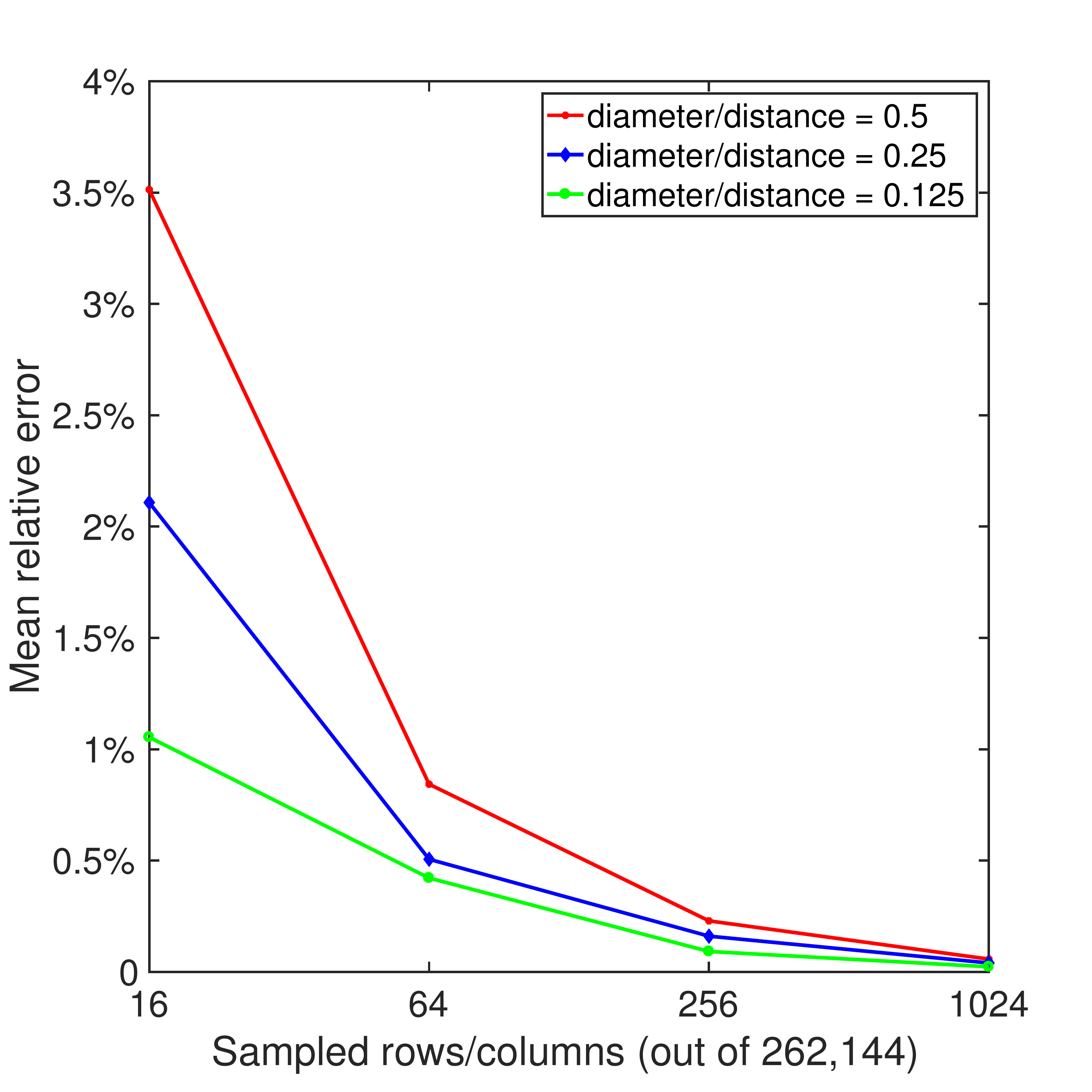}
		\end{center}
		\caption{Error of low-rank compression algorithm against diameter-distance ratio $\eta$.}%
		\label{fig:errnt}%
	\end{figure}
	
	Figure \ref{fig:errnt} shows the algorithm error against the diameter-distance ratios with $N = 262,144$ and different values of $K$. As expected, the relative error decays as $\eta$ increases. This graph is for kernel $\mathcal{K}({\bf r}_i, {\bf r}_j)=\exp{(-0.01R)}/R$, the one for kernel  $\mathcal{K}({\bf r}_i, {\bf r}_j)=\log{(R)}$ is similar.

	\begin{table}[ht]
		\caption{CPU time (second) comparison for well-separated target and source points.}%
		\label{table:kvsptime}%
		\centering
		\begin{tabular}[c]{c|l|l|l|l|l}\hline\hline
		 & $N = 1,024$ & $ N = 4096$ & $ N = 16, 384 $& $N = 65,536$ & $ N = 262,144$ \\\hline
		 Direct  & 0.047 & 0.75 & 12 & 204 & 3,264\\
		 \hline
		$K = 16$ & 0.01 & 0.039 & 0.16 & 0.625 & 2.5\\
		$K = 64$ & 0.04 & 0.16 & 0.66 & 2.6 & 12.0 \\
		$K = 256$ & 0.18 & 0.8 & 2.7 & 12.5 & 47.0 \\
		\hline\hline
		\end{tabular}
	\end{table}

	\subsection{Accuracy and efficiency for a single source and target set }
	Lastly, we test the accuracy and efficiency of the HRCM for target and source points in a  same set. Totally $N=4^p$ points with $ p = 6,7,8,9,10,11$ are uniformly distributed in the domain $[0, 8]\times [0,8]$. For best computation efficiency, we only present the results with $K=16$ and $64$. Note that it has been concluded that once $K$ is fixed, the accuracy of the low-rank compression algorithm for a pair of admissible cluster does not change too much regardless of number of points in them.
	
	The error for kernel $\mathcal{K}({\bf r}_i, {\bf r}_j)=\exp{(-0.01R)}/R$ with different $K$ and $N$ are summarized in Table \ref{table:accuracyHRC}. Note these values are generally smaller than those in Table \ref{table:kvsp}. Because Table \ref{table:kvsp} is for a single pair of  well-separated target/source sets with diameter-distance $\eta=0.5$. But  in the HRCM there exist a mixture of $\eta$ with  $\eta=0.5$ as the largest value. Additionally, similar convergence of error with respected to $K$ is shown in the table.
	\begin{table}[ht]
		\caption{Accuracy of the HRCM  for kernel summation with $\mathcal{K}({\bf r}_i, {\bf r}_j)=\exp{(-0.01R)}/R$}%
		\label{table:accuracyHRC}%
		\centering
		\begin{tabular}[c]{c|l|l|l|l}\hline\hline
		Matrix size & $ N =16,384$ & $N = 65,536$ & $ N = 262,144$ & $N  = 1,048,576$\\\hline
		$k=16$ &  &  &  &   \\
		Mean & 2.87E-3 & 3.32E-3 & 3.46E-3 & 3.53E-3 \\
		Variance & 6.82E-7 & 7.32E-7 & 7.65E-7 & 8.30E-7 \\
		\hline
		$k = 64$ &  &  & &  \\
		Mean & 6.09E-4 & 7.43E-4 & 6.26E-4 & 7.32E-4 \\
		Variance & 7.03E-8 & 6.49E-8 & 6.63E-8 & 7.56E-8 \\
		\hline\hline
		\end{tabular}
	\end{table}
	
	Next we check the error when the parameter $k$ takes a critical role in condition (\ref{eqn:assum}). We consider the Green's function for Helmholtz equation, $\mathcal{K}({\bf r}_i, {\bf r}_i) = \exp{(-ikR)/R}$. Errors and variances for this kernel with $k=0.25$ and $k=5$ are presented in Tables \ref{table:accuracyHRC2} and \ref{table:accuracyHRC3}, respectively.
	\begin{table}[ht]
		\caption{Accuracy of the HRCM  for kernel summation with $\mathcal{K}({\bf r}_i, {\bf r}_j)=\exp{(-ikR)}/R$, $k=0.25$}%
		\label{table:accuracyHRC2}%
		\centering
		\begin{tabular}[c]{c|l|l|l|l}\hline\hline
		Matrix size & $ N =16,384$ & $N = 65,536$ & $ N = 262,144$ & $N  = 1,048,576$\\\hline
		$K=16$ &   &   &  &   \\
		Mean & 2.56E-3 & 2.68E-3 & 2.71E-3 & 2.89E-3 \\
		Variance & 6.11E-7 & 3.56E-7 & 1.35E-7 & 1.01E-7 \\
		\hline
		$K = 64$ &  &   & &  \\
		Mean & 5.42E-4 & 5.51E-4 & 5.57E-4 & 5.89E-4 \\
		Variance & 1.43E-8 & 7.69E-8 & 2.18E-9 & 1.32E-9 \\
		\hline\hline
		\end{tabular}
	\end{table}
	
	\begin{table}[ht]
		\caption{Accuracy of the HRCM  for kernel summation with $\mathcal{K}({\bf r}_i, {\bf r}_j)=\exp{(-ikR)}/R$, $k=5$}%
		\label{table:accuracyHRC3}%
		\centering
		\begin{tabular}[c]{c|l|l|l|l}\hline\hline
		Matrix size & $ N =16,384$ & $N = 65,536$ & $ N = 262,144$ & $N  = 1,048,576$\\\hline
		$K=16$ &   &   &  &   \\
		Mean & 1.08E-2 & 1.38E-2 & 1.71E-2 & 1.98E-2 \\
		Variance & 4.55E-7 & 3.26E-7 & 2.45E-7 & 1.38E-7 \\
		\hline
		$K = 64$ &  &   & &  \\
		Mean & 2.87E-3 & 3.58E-3 & 4.53E-3 & 5.24E-3 \\
		Variance & 1.00E-7 & 3.61E-8 & 1.26E-8 & 3.32E-9 \\
		\hline\hline
		\end{tabular}
	\end{table}

		\begin{figure}[th]
		\begin{center}%
			\includegraphics[width=0.6\textwidth]{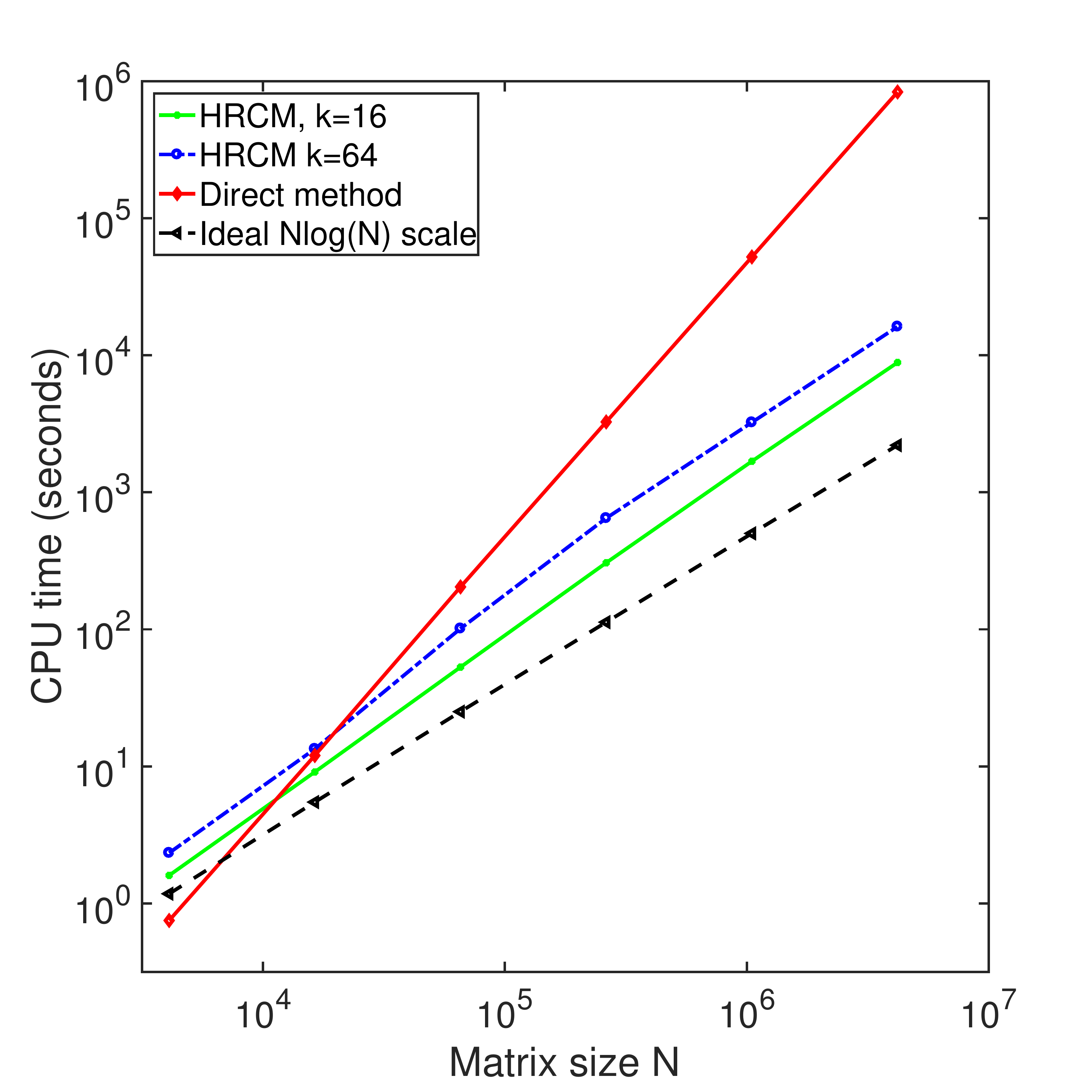}
		\end{center}
		\caption{CPU costs of HRCM with $K = 16$ and $K = 64$, for various matrix sizes. For comparison, the CPU time for direct method is shown in red and the ideal $N\log{(N)}$ curve is in black. }%
		\label{fig:timecost}%
	\end{figure}
	
	The efficiency of the HRCM is presented   in Fig. \ref{fig:timecost} as $\log$-$\log$ CPU time and matrix size $N$. For better comparison, the curves of CPU time for the direct method and an ideal $O(N\log{N})$ scale are also displayed. For HRCM with $K=16$ and $K=64$, the curves are almost parallel to the ideal $O(N\log{N})$ scale. Combining Fig. \ref{fig:timecost} and Tables \ref{table:accuracyHRC}-\ref{table:accuracyHRC3}, we can conclude that the break-even point of the HRCM comparing to the direct method with three or four digits in relative error is $N$ slightly larger than $10^4$. If higher accuracy is desired, one may have to increase number of $K$, hence the break-even point will be larger.

	\section{Conclusion and discussion}
	
	Kernel summation at large scale is a common challenge in a wide range of fields, from problems in computational sciences and engineering to statistical learnings. In this work, we have developed a novel hierarchical random compression method (HRCM) to tackle this common difficulty. The HRCM is a fast Monte-Carlo method that can reduce computation complexity from $O(N^2)$ to $O(N\log N)$ for a given accuracy. The method can be readily applied to iterative solver of linear systems resulting from  discretizing surface/volume integral equations of Poisson equation, Helmholtz equation or Maxwell equations, as well as fractional differential equations. It also applies to machine learning methods such as regression or classification for massive volume and high dimensional data.
	
	In designing HRCM, we first developed a random compression algorithm for kernel matrices resulting from far-field interactions, based on the fact that the interaction matrix from well-separated target and source points is of low-rank. Therefore,  we could sample a small number of columns and rows, independent of matrix sizes and only dependent on the separation distance between source and target locations, from the large-scale matrix, and then perform SVD on the small matrix, resulting in a low-rank approximation to the original matrix. A key factor in the HRCM is that a uniform sampling, implemented without cost of computing the usual sampling distribution based on the magnitude of sampled columns/rows, can yield a nearly optimal error in the low-rank approximation algorithm. HRCM is kernel-independent without the need for analytic formulaes of the kernels. Furthermore, an error bound of the algorithm with some assumption on kernel function was also provided in terms of the smoothness of the kernel, the number of samples and diameter-distance ratio of the well-separated sets.
	
	For general source and target configurations, we applied the concept of $\mathscr{H}$-matrix to hierarchically divide the whole matrix into logical block matrices, for which the developed low-rank compression algorithm can be applied if blocks correspond to a low-rank far field interactions at an appropriate scale, or they are divided further until direct summation is needed. Different from analytic or algebraic FMMs, the recursive structure nature of HRCM only execute an one-time, one way top-to-down path along the hierarchical tree structure: once a low-rank matrix is compressed, the whole block is removed from further consideration, and have no communications with the remaining entries of the whole kernel matrix.  As the HRCM combines the $\mathscr{H}$-matrix structure and low-rank compression algorithms, it has an $O(N\log N)$ computational complexity.
	
	Numerical simulations are provided for source and targets in two-dimensional (2D) geometry for several kernel functions, including 2D and 3D Green's function for Laplace equation, Poisson-Boltzmann equation and Helmholtz equation. In various cases, the mean relative errors of the HRCM against direct kernel summation show convergence in terms of number of samples and diameter-distance ratios. The computational cost was validated numerically as $O(N\log{N})$. Additionally, the break-even point with direct method is in the order of thousands, with three or four digit relative error.
	
	For future work, convergence rate of the HRCM is needed in terms of the number of realizations (i.e. $N_s$ ) and the rank $K$ parameter. Also, we will improve the performance of the HRCM in treating  high frequency wave problems for Helmholtz equations. As shown by our simulations, the mean error was significantly large when the wave number $k$ in the kernel is big. Simply increasing numbers of sampled columns or rows in the low-compression algorithm is one of the ways to handle the difficulty,  but may not be the best way. In addition, the HRCM will be extended to handled  data with even higher dimensions.

\section*{Acknowledgement}

The work was supported by US Army Research Office (Grant No. W911NF-17-1-0368)
and US National Science Foundation (Grant No. DMS-1802143).

\bibliographystyle{plain}
\bibliography{refs}

\begin{thebibliography}{10}

\bibitem{anderson1992implementation}
Christopher~R Anderson.
\newblock An implementation of the fast multipole method without multipoles.
\newblock {\em SIAM Journal on Scientific and Statistical Computing},
  13(4):923--947, 1992.

\bibitem{banjai2008hierarchical}
Lehel Banjai and Wolfgang Hackbusch.
\newblock Hierarchical matrix techniques for low-and high-frequency helmholtz
  problems.
\newblock {\em IMA journal of numerical analysis}, 28(1):46--79, 2008.

\bibitem{bebendorf2000approximation}
Mario Bebendorf.
\newblock Approximation of boundary element matrices.
\newblock {\em Numerische Mathematik}, 86(4):565--589, 2000.

\bibitem{chew2001fast}
Weng~Cho Chew, Eric Michielssen, JM~Song, and Jian-Ming Jin.
\newblock {\em Fast and efficient algorithms in computational
  electromagnetics}.
\newblock Artech House, Inc., 2001.

\bibitem{cai2018}
Min~Hyung Cho, Jingfang Huang, Dangxing Chen, and Wei Cai.
\newblock A heterogeneous fmm for layered media helmholtz equation i: Two
  layers in $\mathbb{R}^2$.
\newblock {\em Journal of Computational Physics}, in press, 2018.

\bibitem{darve2000fast}
Eric Darve.
\newblock The fast multipole method i: error analysis and asymptotic
  complexity.
\newblock {\em SIAM Journal on Numerical Analysis}, 38(1):98--128, 2000.

\bibitem{drineas2006fast1}
Petros Drineas, Ravi Kannan, and Michael~W Mahoney.
\newblock Fast monte carlo algorithms for matrices i: Approximating matrix
  multiplication.
\newblock {\em SIAM Journal on Computing}, 36(1):132--157, 2006.

\bibitem{drineas2006fast2}
Petros Drineas, Ravi Kannan, and Michael~W Mahoney.
\newblock Fast monte carlo algorithms for matrices ii: Computing a low-rank
  approximation to a matrix.
\newblock {\em SIAM Journal on computing}, 36(1):158--183, 2006.

\bibitem{frieze2004}
A.~Frieze, R.~Kannan, and S.~Vempala.
\newblock Fastmonte carlo algorithms for finding low-rank approximations.
\newblock {\em J. ACM}, 51(6):1025--1041, 2004.

\bibitem{gimbutas2003generalized}
Zydrunas Gimbutas and Vladimir Rokhlin.
\newblock A generalized fast multipole method for nonoscillatory kernels.
\newblock {\em SIAM Journal on Scientific Computing}, 24(3):796--817, 2003.

\bibitem{gray2001n}
Alexander~G Gray and Andrew~W Moore.
\newblock N-body'problems in statistical learning.
\newblock In {\em Advances in neural information processing systems}, pages
  521--527, 2001.

\bibitem{greengard1997fast}
Leslie Greengard and Vladimir Rokhlin.
\newblock A fast algorithm for particle simulations.
\newblock {\em Journal of Computational Physics}, 135(2):280--292, 1997.

\bibitem{hackbusch1999sparse}
Wolfgang Hackbusch.
\newblock {A sparse matrix arithmetic based on $\mathscr{H}$-matrices. Part I:
  Introduction to $\mathscr{H}$-matrices}.
\newblock {\em Computing}, 62(2):89--108, 1999.

\bibitem{hackbusch2000sparse}
Wolfgang Hackbusch and B.N. Khoromskij.
\newblock {A sparse $\mathscr{H}$-matrix Arithmetic. Part II: Application to
  Multi-Dimensional Problems}.
\newblock {\em Computing}, 64(2):21--47, 2000.

\bibitem{siamrv2011}
Nathan Halko, Per-Gunnar Martinsson, and Joel~A. Tropp.
\newblock Finding structure with randomness: Probabilistic algorithms for
  constructing approximate matrix decompositions.
\newblock {\em SIAM Review}, 53(2):217--288, 2011.

\bibitem{kapur1998n}
Sharad Kapur and David~E Long.
\newblock N-body problems: Ies 3: Efficient electrostatic and electromagnetic
  simulation.
\newblock {\em IEEE Computational Science and Engineering}, 5(4):60--67, 1998.

\bibitem{kapur1997fast}
Sharad Kapur and Jinsong Zhao.
\newblock A fast method of moments solver for efficient parameter extraction of
  mcms.
\newblock In {\em Proceedings of the 34th annual Design Automation Conference},
  pages 141--146. ACM, 1997.

\bibitem{lee2014distributed}
Dongryeol Lee, Piyush Sao, Richard Vuduc, and Alexander~G Gray.
\newblock A distributed kernel summation framework for general-dimension
  machine learning.
\newblock {\em Statistical Analysis and Data Mining: The ASA Data Science
  Journal}, 7(1):1--13, 2014.

\bibitem{march2017far}
William~B March and George Biros.
\newblock Far-field compression for fast kernel summation methods in high
  dimensions.
\newblock {\em Applied and Computational Harmonic Analysis}, 43(1):39--75,
  2017.

\bibitem{march2015askit}
William~B March, Bo~Xiao, and George Biros.
\newblock Askit: Approximate skeletonization kernel-independent treecode in
  high dimensions.
\newblock {\em SIAM Journal on Scientific Computing}, 37(2):A1089--A1110, 2015.

\bibitem{martinsson2011randomized}
Per-Gunnar Martinsson, Vladimir Rokhlin, and Mark Tygert.
\newblock A randomized algorithm for the decomposition of matrices.
\newblock {\em Applied and Computational Harmonic Analysis}, 30(1):47--68,
  2011.

\bibitem{yarvin1998generalized}
Norman Yarvin and Vladimir Rokhlin.
\newblock Generalized gaussian quadratures and singular value decompositions of
  integral operators.
\newblock {\em SIAM Journal on Scientific Computing}, 20(2):699--718, 1998.

\bibitem{ying2004kernel}
Lexing Ying, George Biros, and Denis Zorin.
\newblock A kernel-independent adaptive fast multipole algorithm in two and
  three dimensions.
\newblock {\em Journal of Computational Physics}, 196(2):591--626, 2004.

\bibitem{yu2017geometry}
Chenhan~D Yu, James Levitt, Severin Reiz, and George Biros.
\newblock Geometry-oblivious fmm for compressing dense spd matrices.
\newblock In {\em Proceedings of the International Conference for High
  Performance Computing, Networking, Storage and Analysis}, page~53. ACM, 2017.

\end{thebibliography}

\end{document}